\newtheorem{thm}{Theorem}[section]
\newtheorem{cor}[thm]{Corollary}
\newtheorem{lem}[thm]{Lemma}
\newtheorem{prop}[thm]{Proposition}
\theoremstyle{definition}
\newtheorem{defi}[thm]{Definition}
\newtheorem*{remark}{Remark}
\def\tL{{\tilde L}}
\def\tr{\text{tr}}
\def\ed{\stackrel{d}{=}}
\def\bi{\begin{itemize}}
\def\ei{\end{itemize}}
\def\aa{\alpha}
\def\eps{\epsilon}
\def\be{\begin{equation}}
\def\ee{\end{equation}}
\def\bea{\begin{eqnarray}}
\def\eea{\end{eqnarray}}
\def\nn{\nonumber}
\def\ff{\infty}
\def\({\left(}
\def\){\right)}
\def\[{\left[}
\def\]{\right]}
\def\lb{\left|}
\def\rb{\right|}
\def\la{\langle}
\def\ra{\rangle}
\def\Var{\text{Var}}
\def\Spec{\mu}
\def\weak{\stackrel{\text{w}}{\to}}
\def\vague{\stackrel{\text{vag}}{\to}}
\def\emp{{\tiny\varnothing}}
\def\ER{Erd\H os-R\'enyi}
\def\LK{L\'evy-Khintchine}
\def\CS{Cauchy-Stieltjes}
\def\r{\mathcal{R}}
\def\cc{\mathcal{C}}
\def\aa{\mathcal{A}}
\def\ss{\mathcal{C}}
\def\hh{\mathcal{H}}
\def\Roo{{\R\backslash\{0\}}}
\def\core{\DD_{\text{fs}}}
\def\cPWIST{\cc_{G_{\infty}}}
\def\Pii{\lambda_\Pi}
\def\N{\mathbb{N}}
\def\R{\mathbb{R}}
\def\C{\mathbb{C}}
\def \P{\mathbf{P}}
\def \E{\mathbf{E}}
\def \BB{{\cal B}}
\def \DD{{\cal D}}
\def \GG{{\cal G}}
\def \II{{\cal I}}
\def \LL{{\cal L}}
\def \NN{{\cal N}}
\def \SS{{\cal S}}
\begin{document}

\title{\LK\ random matrices and the Poisson weighted infinite skeleton tree}
\author{Paul Jung\footnote{Research partially supported by NSA grant H98230-14-1-0144.}
\\[2ex]
{\normalsize Department of Mathematics, University of Alabama Birmingham}\\[1ex]
   }

\maketitle

\abstract{We study a class of Hermitian random matrices
which includes Wigner matrices, heavy-tailed
random matrices, and sparse random matrices such as adjacency matrices
of \ER\ random graphs with $p_n\sim\frac 1 n$. Our $n\times n$
random matrices have real entries which are i.i.d. up to symmetry. The distribution of entries
depends on $n$, and we require row sums to converge in
distribution; it is then well-known that the limit
distribution must be infinitely divisible.

We show that a limiting empirical spectral distribution
(LSD) exists, and
via local weak convergence of associated graphs, the LSD corresponds to the
spectral measure associated to the root of a graph which is  formed by connecting infinitely many Poisson
weighted infinite trees using a backbone structure of special edges called ``cords to infinity''. One example covered by
the results are matrices with i.i.d. entries having infinite second
moments, but normalized to be in the Gaussian domain of attraction. In this case, the limiting graph is $\N$ rooted at $1$, so the LSD is
the semi-circle law. The results also extend to
self-adjoint complex matrices and also to Wishart matrices.
}


\vspace{5mm}

\textit{MSC:} 15B52, 60B20, 60G51.

\vspace{5mm}

\textit{Keywords:} Empirical spectral distribution, Wigner matrices, L\'evy matrices, heavy-tailed random matrices,
sparse random matrices, \ER\  graph, local weak convergence, cavity method.

\newpage

\tableofcontents

\section{Introduction}
This paper jointly studies the limiting
spectral distributions (LSD) for three classes of Hermitian random matrices
that have appeared in the literature. The first class of random
matrices are classic Wigner matrices introduced in the seminal
work of their namesake, \cite{wigner1955characteristic}. The
literature on this class of random matrices is overwhelmingly
abundant (see \cite{anderson2010introduction, bai2010spectral, tao2012topics}).

The second class of matrices are adjacency matrices of
Erd\H os-R\'enyi random graphs on ${n}$ vertices whose edges are
present with probability proportional to $1/{n}$. The analysis of the
LSD in the context of random matrices seems to have started in
\cite{rodgers1988density}.  These matrices are called {\it
sparse}\footnote{Here, the random number of non-zero entries in each row remains
bounded in distribution as ${n}\to\infty$. The term ``sparse''
sometimes refers to what others call dilute random matrices for which the order of
non-zero entries in each row is $\textit{o}({n})$. } {\it random matrices}, and
they can be considered a Poissonian variation of Wigner matrices.
The LSD of sparse random matrices was analyzed using the ``moment
method'' in \cite{ryan1998limit, bauer2001random, khorunzhy2004eigenvalue, zakharevich2006generalization}, and
using the ``resolvent method'' in \cite{khorunzhy2004eigenvalue}. An insightful modification of the
latter approach led to  improved results in \cite{bordenave2010resolvent}.
(see
also \cite{kuhn2008spectra} for references in the physics
literature).

Finally, the third class of random matrices are formed from properly
normalized heavy-tailed entries and, following
\cite{benarous2008spectrum}, we call them {\it heavy-tailed random
matrices}. These are also known in the physics literature as L\'evy matrices or Wigner-L\'evy matrices, and they were
introduced by Cizeau and Bouchaud in \cite{cizeau1994theory}. Later,
they were studied more rigorously in \cite{soshnikov2004poisson, benarous2008spectrum,
bordenave2011spectrum}. These matrices are {\it not} to be confused
with {\it free} L\'evy matrices \cite{benaych2005classical,
burda2007random}.

In each of the three classes of matrices above, the entries are
i.i.d. up to self-adjointness, although the distributions may
differ for different ${n}$. In order to obtain non-trivial LSDs, a
proper rescaling or change in distribution is needed as
$n\to\infty$ (such rescaling is often implicit in the
formulation). After respectively rescaling, if one sums all the
entries in a single row or column and takes ${n}\to\infty$, then one
obtains a Gaussian, Poisson, or stable distribution in each of the
respective classes. These are all examples of infinitely divisible
distributions which suggests that all three classes of matrices can many times
be thought of under this umbrella, and various papers (for example \cite[Sec. 3.1]{ryan1998limit}) have done exactly that.
More recently,
\cite{benaych2013central} establishes a functional central limit theorem for the \CS\ transforms of the LSDs
of all three classes, and \cite{male2012limiting} studies the joint LSDs of a pair of independent ensembles in
these three classes using algebraic techniques inspired by free probability.

Here, we also view these three classes as examples from this
larger class of matrix ensembles characterized by the \LK\ formula, and in particular, the matrices are viewed as (weighted) adjacency matrices. As was done in the heavy-tailed setting in \cite{bordenave2011spectrum}, our main objective is to equate the LSD of the limiting adjacency operator with the spectral measure associated to the root (or vacuum state) vector in
$L^2(V)$ where $V$ is the vertex set of the limiting graph in the sense of local weak convergence (see below).
This allows for further analysis of the LSD using the recursive structure of the limiting graph.

The ensembles we consider have
i.i.d. complex entries for each ${n}$, up to self-adjointness, with zeros on the diagonal. It is
well-known that any weak limit of row sums must be
infinitely divisible in $\C$ (viewed as $\R^2$). Actually, the ``identically
distributed'' condition may be weakened to require only that the
moduli of the entries are identically distributed. In this weakened form one still has that the sum
of the square-moduli of entries in a row, i.e., the Euclidean norm-squared of a row as a vector in $\R^{2n}$, converges in distribution to a positive  law
which is the marginal distribution of a L\'evy subordinator.

In particular,
recall (see \cite{kyprianou2006introductory} or \cite{kallenberg2002foundations}) that a probability distribution $\mu$ on $\R$ is infinitely divisible  with distribution $ID(\sigma^2,b,\Pi)$ and
 L\'evy exponent $\Psi$,
$$e^{\Psi(\theta)}:= \int_{\R} e^{i\theta {x}} \mu(d{x}) \quad\text{for } \theta\in\R,$$
if and only if there exists a triplet of characteristics
$(\sigma^2,b,\Pi)$ such that
 \be\label{psi}
\Psi(\theta):= -\frac 1 2 \theta^2\sigma^2+ i \theta b
+\int_{\R } e^{i\theta {{} x}}-1-\frac{i\theta {{} x}}{1+x^2} \Pi(d{{} x}),\ee where $\sigma^2\ge 0, b\in\R $, and
$\Pi(d{{} x})$ concentrates on $\Roo$ and satisfies
\be \label{levymeasure} \int_{\R } (1\wedge |{{} x}|^2)\,\Pi(d{{} x})<\infty. \ee
If $\mu$ concentrates on $(0,\infty)$ then the exponent corresponds to the subordinator characteristics
$(b_s,\Pi_s)$ and takes the simplified form
 \be\label{psi2}
\Psi_s(\theta):=i \theta b_s
+\int_{(0,\infty)} (e^{i\theta {{} x}}-1)\, \Pi_s(d{{} x}),\ee where
$\Pi_s(d{{} x})$ also concentrates on $(0,\infty)$, but instead of \eqref{levymeasure}, it satisfies \be \nn\label{levy
measure subordinator} \int_{(0,\infty) } (1\wedge {x})\,\Pi_s(d{{} x})<\infty. \ee
Here, the $s$ subscript indicates the {\it subordinator} form of the L\'evy exponent.  

We say a sequence of  ${n}\times {n}$ random matrices
$\(\cc_{n}\)_{n\in\N}$ is a {\bf \LK\ random matrix ensemble} with
characteristics $(\sigma^2,0,\Pi)$ if for each $n$, the moduli of entries
$\cc_n(j,k)=\bar\cc_n(k,j), j\neq k$ are i.i.d. (up to
self-adjointness, with zeros on the diagonal) and the \be
\label{LKcondition} \text{weak limit }\
\lim_{n\to\infty}\sum_{k=1}^{n} \pm|\cc_n(1,k)|  \ \text{ is infinitely divisible with characteristics
}\
(\sigma^2,0,\Pi),\ee where the signs $\pm$ are independent Rademacher random variables  (independent also from $\cc_n$). This implies that $\Pi$ is a
symmetric measure. It is not hard to see that \eqref{LKcondition} is true if and only if
$$\lim_{n\to\infty}\sum_{k=1}^{n} \cc_n(1,k)$$ is infinitely divisible with some other characteristics $(\sigma^2,\tilde{b},\tilde{\Pi})$ where $\sigma^2$ remains unchanged, but $\tilde{b}$ may be nonzero and $\tilde{\Pi}$ is not in general symmetric.
An equivalent form of the above is that the
\be
\label{LKcondition2} \text{weak limit }\
\lim_{n\to\infty}\sum_{k=1}^{n} |\cc_n(1,k)|^2  \ \text{ is
infinitely divisible with subordinator characteristics }\
(\sigma^2,\hat{\Pi}_s)\ee
where $\hat{\Pi}_s$ can be  easily found in terms of $\Pi$ (see \cite[Ch. 15]{kallenberg2002foundations}).
Note that in this form $\sigma^2$ plays the role of the drift coeffecient $b_s$.
We note that by standard arguments, one could set
the diagonal elements to any real number which converges to $0$ fast
enough, and this would not affect the LSD. For the sake of simplicity, we will always
set diagonal entries to zero.

In the context of L\'evy processes, the three components of the
triplet $(\sigma^2,b,\Pi)$ correspond to a Brownian component, a
drift component, and a jump component (with possibly additional
``compensating drift''), respectively. We will see in our context
that $\sigma^2$ corresponds to a Wigner component, the drift
component is inconsequential since by using the random signs it becomes 0 (cf. \cite[Remark
1.9]{benarous2008spectrum}), and the L\'evy measure $\Pi$
generalizes both heavy-tailed and sparse random matrices.

Let $\(\aa_{n}\)_{n\in\N}$ denote an ensemble which satisfies the
above conditions except it does not require the condition of
self-adjointness, $\cc_n(j,k)=\bar\cc_n(k,j)$.  We call this
a non-Hermitian \LK\ random matrix ensemble.
Using a standard bipartization/Hermitization method\footnote{This
method has  appeared in the physics literature (see
\cite{feinberg1997non} and the references therein) and is discussed in the texts
\cite{anderson2010introduction, tao2012topics}.}, our
results extend to the LSD of Wishart matrices $\(\aa^* \aa_n\)_{n\in\N}$ or
equivalently to the limiting empirical singular value distribution
for $\(\aa_n\)_{n\in\N}$.

\subsection{Main results}

For a given \LK\ ensemble, let $\{\lambda_j\}_{j=1}^{n}$ denote the
eigenvalues of the ${n}$th matrix in the sequence. The empirical
spectral distribution (ESD) is defined as \be \label{esd}
\Spec_{\cc_n}:= \frac 1 {n} \sum_{j=1}^{n} \delta_{\lambda_j}. \ee

\begin{thm}[Existence of the LSD]\label{thm:main}
For any \LK\ random matrix ensemble $\(\cc_{n}\)_{n\in\N}$ with characteristics
$(\sigma^2,0,\Pi)$ (or alternatively with subordinator characteristics $(\sigma^2,\hat{\Pi}_s)$), there exists a symmetric nonrandom probability measure
$\Spec_{\cc_\infty}$ to which the ESDs $(\Spec_{\cc_n})_{n\in\N}$ weakly
converge, almost surely, as ${n}\to\infty$. In other words,
\be \P\(\lim_{{n}\to\infty}
\langle \Spec_{\cc_n}, f\rangle = \langle \Spec_{\cc_\infty},
f\rangle \text{ for all bounded continuous }f\) = 1. \ee
Moreover, the limiting measure $\Spec_{\cc_\infty}$ has bounded support if and only if $\Pi$ is trivial.
\end{thm}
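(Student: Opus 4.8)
The plan is to realize the limit $\Spec_{\cc_\infty}$ as the \emph{expected} spectral measure at the root of the limiting graph $G_\infty$ (the Poisson weighted infinite skeleton tree), writing $\cc_\infty$ for its adjacency operator on $L^2(V(G_\infty))$ and $\Spec^\rho_{\cc_\infty}$ for the random spectral measure of $\cc_\infty$ relative to the root vector $\delta_\rho$, so that $\Spec_{\cc_\infty}=\E[\Spec^\rho_{\cc_\infty}]$. The proof then splits into (i) convergence of the \emph{averaged} ESDs, $\E[\Spec_{\cc_n}]\weak\E[\Spec^\rho_{\cc_\infty}]$, and (ii) an upgrade to almost sure weak convergence by concentration. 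For (i), one starts from the elementary identity (valid by the spectral theorem, with $\rho_n$ a uniform index in $\{1,\dots,n\}$ and $\Spec^{\rho_n}_{\cc_n}$ the spectral measure of $\cc_n$ relative to the coordinate vector $e_{\rho_n}$)
\be
\E\langle \Spec_{\cc_n}, f\rangle \;=\; \E\big[\tfrac1n \tr f(\cc_n)\big] \;=\; \E\big[\langle \Spec^{\rho_n}_{\cc_n}, f\rangle\big],
\ee
so it suffices to prove that the pointed weighted random graph $(G_n,\rho_n)$ carrying the weights $|\cc_n(j,k)|$ converges in the local weak sense to $(G_\infty,\rho)$, and that this entails convergence of the associated root spectral measures.

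The bulk of the work is the local weak convergence. After truncating weights that are too small or too large (the latter controlling the heavy-tailed regime via a rank-$o(n)$ surgery that is harmless for the ESD), the one-step neighbourhood of the root is governed by the i.i.d.\ family $\{\pm|\cc_n(\rho_n,k)|\}_k$, whose empirical point process on $\Roo$ converges, by \eqref{LKcondition}, to the Poisson process with intensity prescribed by $\Pi$; the Gaussian part $\sigma^2$ --- entering as the accumulated effect of infinitely many vanishingly small weights --- is absorbed into the ``cord to infinity'' issuing from the root. Because inter-entry dependencies are of order $1/n$, independence is restored in the limit, and iterating this description over neighbourhoods of bounded radius yields the recursive PWIST. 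To convert this into convergence of root spectral measures one works with the resolvent diagonal entry $\langle e_{\rho_n},(\cc_n-z)^{-1}e_{\rho_n}\rangle$, $\Im z>0$: one shows it is, up to an error made uniformly small by truncating enough weights, a continuous functional of a bounded neighbourhood of the root, hence converges in expectation to $\E\langle\delta_\rho,(\cc_\infty-z)^{-1}\delta_\rho\rangle$, after establishing that $\cc_\infty$ is a.s.\ essentially self-adjoint; Stieltjes inversion then gives (i). \emph{The main obstacle lies in this step}: the matrices are not locally finite, so both the correct identification of the limit (why the Gaussian component condenses into a backbone of cords rather than an ill-defined infinite-degree vertex) and the analytic control (essential self-adjointness of $\cc_\infty$ and uniform negligibility of small weights in the resolvent) require real work, extending the heavy-tailed / cavity-method machinery. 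Symmetry of $\Spec_{\cc_\infty}$ is then automatic: $G_\infty$ is a tree, hence bipartite and acyclic, so after gauging away edge phases $\cc_\infty$ is unitarily equivalent to $-\cc_\infty$ via the bipartition sign flip, whence $\Spec^\rho_{\cc_\infty}$ is a.s.\ symmetric.

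For (ii), fix a bounded Lipschitz $f$ and expose $\cc_n$ one row (with its symmetric column) at a time: deleting a row and column is a Hermitian perturbation of rank at most $2$, so by eigenvalue interlacing the empirical distribution functions differ by at most $2/n$ in the uniform norm, and $\langle\Spec_{\cc_n},f\rangle$ is a martingale with increments $O(\|f\|_{\mathrm{Lip}}/n)$ --- a bound insensitive to the magnitudes of the entries. Azuma--Hoeffding gives $\P(|\langle\Spec_{\cc_n},f\rangle-\E\langle\Spec_{\cc_n},f\rangle|>t)\le 2e^{-cnt^2}$, summable in $n$; Borel--Cantelli, combined with (i) and a countable convergence-determining family of test functions, yields the asserted almost sure weak convergence to the nonrandom $\Spec_{\cc_\infty}$.

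For the support dichotomy, the key quantity is the square-weight at the root of $G_\infty$: by the identification in (i), $\int x^2\,\Spec^\rho_{\cc_\infty}(dx)=\|\cc_\infty\delta_\rho\|^2=\sum_{v\sim\rho}w(\rho,v)^2$ has the same law as the distributional limit of $\sum_k|\cc_n(1,k)|^2$, which by \eqref{LKcondition2} is the marginal $Y$ of the subordinator with characteristics $(\sigma^2,\Pi_s)$. If $\Pi$ (equivalently $\Pi_s$) is nontrivial then $Y$ is not essentially bounded --- a subordinator with nonzero L\'evy measure has unbounded marginal support --- so $\Spec^\rho_{\cc_\infty}$ is a.s.\ not supported in any fixed $[-M,M]$; since $\Spec_{\cc_\infty}=\E[\Spec^\rho_{\cc_\infty}]$, were $\Spec_{\cc_\infty}$ supported in $[-M,M]$ we would get $\Spec^\rho_{\cc_\infty}(\R\setminus[-M,M])=0$ a.s.\ and hence $Y\le M^2$ a.s., a contradiction; thus $\Spec_{\cc_\infty}$ has unbounded support. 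Conversely, if $\Pi$ is trivial then \eqref{LKcondition2} forces $\sum_k|\cc_n(1,k)|^2\to\sigma^2$ in probability, with the array $\{|\cc_n(1,k)|^2\}_k$ infinitesimal and carrying no L\'evy mass at any fixed scale $\eps>0$; discarding the $o(n)$ entries of modulus exceeding $\eps$ --- a rank-$o(n)$ perturbation, negligible for the LSD --- leaves an $\eps$-bounded-entry matrix with row square-sums still tending to $\sigma^2$, and the standard count in which only noncrossing pair partitions survive identifies its LSD, hence also $\Spec_{\cc_\infty}$, as the semicircle law on $[-2\sigma,2\sigma]$, which has bounded support.
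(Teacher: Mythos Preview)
Your overall architecture---local weak convergence of the rooted weighted graphs to the PWIST, strong resolvent convergence yielding convergence of expected ESDs, then concentration via rank-two surgery/interlacing for the a.s.\ upgrade---matches the paper's. The concentration step (your (ii)) is essentially the paper's Lemma~\ref{lem:com}, and your truncation of large entries via a rank-$o(n)$ perturbation is the paper's Lemma~\ref{LDP lemma} combined with~\eqref{lidskii}.

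The gap is precisely where you flag ``the main obstacle'': handling $\sigma>0$. You write that the Gaussian part ``is absorbed into the cord to infinity'' and that the resolvent is ``up to an error made uniformly small by truncating enough weights, a continuous functional of a bounded neighbourhood of the root'', but you give no mechanism for either claim. The difficulty is that when $\sigma>0$ the small-entry square sums do \emph{not} vanish---condition~\eqref{eq:ui} fails---so the resolvent is genuinely \emph{not} approximable by a local functional of the graph, and Theorem~\ref{thm:lwc to spectrum} cannot be applied to $(\cc_n)$ itself. The paper's device for this is not analytic but combinatorial: it introduces an explicit replacement procedure (Section~\ref{replacement}) that, row by row in a carefully chosen order, collapses all sub-threshold entries into a single deterministic entry of value~$\sigma$, producing modified matrices $(\cc_n^\sigma)$ which \emph{do} satisfy~\eqref{eq:ui} and whose local weak limit is visibly the PWIST. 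The substantive work (Proposition~\ref{lemma:same}) is then a moment-method computation showing that, for ensembles with bounded entries, the limiting expected root spectral measure is unchanged by this replacement; the general case is reached by chaining $\cc_n$ to its truncation $\tau_m\cc_n$ to the modified truncation $\tau_m\cc_n^\sigma$ to the PWIST via a three-$\eps$ argument. Your proposal has no analogue of the replacement step or of the moment-method invariance proof, and without them the passage from local weak convergence to resolvent convergence breaks down exactly when $\sigma>0$.

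Your argument for unbounded support when $\Pi$ is nontrivial---reading off $\int x^2\,\Spec^\rho_{\cc_\infty}(dx)=\|\cc_\infty e_\emp\|^2$ as the time-one subordinator marginal and noting the latter is a.s.\ finite but essentially unbounded---is different from the paper's route (which invokes the moment lower bound $\SS_{2p,\ell}\le\II_{2p,\ell}$ of~\cite{bauer2001random}) and is a cleaner alternative once Theorem~\ref{thm:main2} is in hand. Your argument for the converse direction is essentially the paper's.
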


An extension of the above result to the singular values of
$\(\aa_n-zI_n\)_{n\in\N}$ in the spirit of
\cite{dozier2007empirical}  follows by way of  Theorem 2.1 in \cite{bordenave2011nonHermitian}
(see also \cite{feinberg1997non}). This gives us the following corollary.
\begin{cor}[LSD for Wishart ensembles]\label{cor2}
Suppose $\(\aa_n\)_{n\in\N}$ is a non-Hermitian \LK\
ensemble  with characteristics $(\sigma^2,0,\Pi)$. The LSD,
$\nu_{\infty}$, of the Wishart ensemble $\(\aa^*\aa_n\)_{n\in\N}$
exists and is given by
$$\nu_{\infty}(B)=\Spec_{\cc_\infty}\{x:x^2\in B\}$$
where $\Spec_{\cc_\infty}$ is the LSD from Theorem \ref{thm:main} for the Hermitian ensemble with the same  characteristics.
\end{cor}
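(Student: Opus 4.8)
The plan is to reduce the Wishart problem to the already-established Hermitian case by the standard bipartization (Hermitization) trick mentioned in the introduction. Given the non-Hermitian $n\times n$ matrix $\aa_n$, form the $2n\times 2n$ Hermitian block matrix
$$
\cc_{2n} := \begin{pmatrix} 0 & \aa_n \\ \aa_n^* & 0 \end{pmatrix}.
$$
The first step is to verify that $\(\cc_{2n}\)_{n\in\N}$, viewed as a $2n\times 2n$ ensemble, is itself a \LK\ random matrix ensemble with the same characteristics $(\sigma^2,0,\Pi)$. Indeed, the off-diagonal entries of $\cc_{2n}$ are, up to self-adjointness, exactly the entries of $\aa_n$ (with the diagonal blocks being zero), so the moduli are i.i.d.\ up to symmetry; and a row of $\cc_{2n}$ contains exactly the entries of one row (or column) of $\aa_n$, so the randomly signed row sum in \eqref{LKcondition} has the same weak limit as for $\aa_n$. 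Hence Theorem \ref{thm:main} applies to $\(\cc_{2n}\)$ and yields a deterministic symmetric LSD, which must coincide with the LSD $\Spec_{\cc_\infty}$ of the Hermitian ensemble with characteristics $(\sigma^2,0,\Pi)$, since that theorem asserts the limit depends only on the characteristics. (One should note that passing from a $2n$-indexed sequence to all $n$ is harmless: interlacing or a direct resolvent estimate shows the ESD of $\cc_{2n}$ and the bipartization of $\aa_n$ agree, and in any case the almost sure convergence along the even indices suffices to identify the limit.)

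The second step is the elementary linear-algebra identity relating the spectrum of $\cc_{2n}$ to the singular values of $\aa_n$: if $s_1,\dots,s_n \ge 0$ are the singular values of $\aa_n$, then the eigenvalues of $\cc_{2n}$ are precisely $\{\pm s_1,\dots,\pm s_n\}$. Consequently, for any bounded continuous even function $g$ on $\R$,
$$
\langle \Spec_{\cc_{2n}}, g\rangle = \frac{1}{2n}\sum_{j=1}^{n} \big(g(s_j)+g(-s_j)\big) = \frac 1 n \sum_{j=1}^n g(s_j) = \langle \nu_n \circ (x\mapsto x^2)^{-1}\text{-type measure}, \dots\rangle,
$$
so the ESD of $\aa^*\aa_n$ is the pushforward of the symmetric measure $\Spec_{\cc_{2n}}$ under $x\mapsto x^2$. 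Taking $n\to\infty$ and using Step 1 together with the continuous mapping theorem (the map $x\mapsto x^2$ is continuous, and one checks the pushforward of weak convergence through it, using that $\Spec_{\cc_\infty}$ puts no mass at infinity), we conclude that the ESDs of $\(\aa^*\aa_n\)$ converge almost surely to the pushforward $\nu_\infty(B) = \Spec_{\cc_\infty}\{x : x^2 \in B\}$, which is the claimed formula. (The equivalence with the limiting empirical singular value distribution is then immediate, being the pushforward under $x\mapsto |x|$.)

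I expect the only genuine subtlety, rather than the main obstacle, to be the bookkeeping in Step 1: confirming that the modulus-i.i.d.\ hypothesis and the row-sum convergence condition \eqref{LKcondition} are inherited by the bipartized matrix, and that the zero diagonal blocks do not disturb the characteristics (they are zero, hence trivially in the class of ``diagonal elements converging to $0$ fast enough'' noted after \eqref{LKcondition2}). The spectral identity in Step 2 and the continuous-mapping argument are routine. The cited result, Theorem 2.1 in \cite{bordenave2011nonHermitian} together with \cite{dozier2007empirical}, provides the analogous statement for $\aa_n - zI_n$ and can be invoked verbatim for the slightly more general form, but for the plain Wishart ensemble $\aa^*\aa_n$ the self-contained two-step argument above suffices.
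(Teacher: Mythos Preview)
Your overall approach---bipartize, identify eigenvalues of the bipartized matrix with $\pm$ the singular values of $\aa_n$, push forward under $x\mapsto x^2$---is exactly the route the paper takes. Step~2 is correct and routine.

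There is, however, a genuine gap in Step~1. The bipartized matrix $\cc_{2n}$ is \emph{not} a \LK\ ensemble in the sense of this paper: the two diagonal $n\times n$ blocks are identically zero, so among the off-diagonal entries of the $2n\times 2n$ matrix there are $2(n^2-n)$ positions forced to be zero and $2n^2$ positions carrying the entries of $\aa_n$. Thus the moduli of the off-diagonal entries are not i.i.d., contrary to your claim. Your attempt to dispose of the zero blocks as ``diagonal elements converging to $0$ fast enough'' conflates diagonal \emph{entries} with diagonal \emph{blocks}; the exemption after \eqref{LKcondition2} concerns only the $n$ entries $\cc_n(j,j)$, not an $n\times n$ submatrix. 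Consequently Theorem~\ref{thm:main} cannot be quoted as a black box for $\cc_{2n}$, and the assertion that ``the limit depends only on the characteristics'' is not immediately available.

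The paper handles this by invoking Theorem~2.1 of \cite{bordenave2011nonHermitian}, which is tailored to the bipartite setting; its own proof sketch constructs the similar matrix $\BB_n$ with $2\times 2$ blocks and appeals to that external result (the closing phrase ``by Theorem~\ref{thm:main}'' there should be read as ``by the methods of Theorem~\ref{thm:main}'' as adapted in \cite{bordenave2011nonHermitian}). If you want a self-contained route, you must redo the local weak convergence argument for the bipartite graph associated to $\cc_{2n}$: the rooted limit is still a PWIST$(\sigma,\Pii)$, since each row of $\cc_{2n}$ still contains a full row of $\aa_n$ and the tree does not see the bipartition, but this requires checking rather than citing.
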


In the case where $\Pi$ has exponential moments, an extension of the standard moment method is enough
to handle the proof of Theorem \ref{thm:main}, and in Section \ref{sec:moment method} we do just that under the slightly stronger assumption that $\Pi$ has bounded support.
When $\Pi$ has some moments which are infinite and $\sigma=0$,
the proof follows by generalizing insightful
{\it local weak convergence} arguments of \cite{bordenave2010resolvent,
bordenave2011spectrum} (see Section \ref{sec:nowigner}). To extend this to the general case, we
combine the local weak convergence arguments with  a generalized moment method, and tail truncation arguments.

As a by-product of local weak convergence, one can view the LSD of the random matrix ensembles as the spectral measure
of a weighted adjacency operator, at the root vector, of some new infinite graph. For ensembles with characteristics $(0,0,\Pi)$, this
idea is again a generalization of arguments in \cite{bordenave2011spectrum}. However, when $\sigma>0$ a non-trivial generalization of Aldous' Poisson weighted
infinite tree which we call a {\bf Poisson weighted infinite
skeleton tree (PWIST)} is required.

The idea of local weak convergence was introduced by Benjamini and Schramm \cite{benjamini2001recurrence} and further developed
by Aldous and Steele \cite{aldous2004objective}. Aldous and Steele describe the technique as finding ``a new, infinite, probabilistic object whose
local properties inform us about the limiting properties of a
sequence of finite problems.'' When the limiting object has a tree structure,
local weak convergence provides a general framework to make the {\it cavity method}
in physics rigorous.
In our context, the cavity method was used in \cite{cizeau1994theory} and our new infinite
object (with a tree structure) generalizes Aldous' { Poisson infinite weighted trees
(PWIT)} by adding to it ``cords'' of infinite length which connect
to independent copies of other PWITs. These cords form a {\it
backbone structure} for a collective object which we refer to as a
PWIST.

Let us first recall the definition of the PWIT$(\Pii)$. Start with a
single root vertex $\emp$ with an infinite number of (first
generation) children indexed by $\N$. The weight  on the edge to the
$k$th child  is the $k$th arrival (ordered by absolute value) of a Poisson process on $\Roo$
with some intensity $\lambda$. In our situation the intensity $\Pii$
is derived from the measure $\Pi$ on $\Roo$ by inverting:
\be \label{pi} \Pii\{x:1/x\in B\}:=\Pi(B) \ee For example, if
$\Pi(dx)$ is equivalent to Lebesgue measure with density $f_\Pi(x)dx$ then
$\Pii(dx)$ is also equivalent to Lebesgue measure  with density
$x^{-2}f_\Pi(1/x)dx$ where $x^{-2}$ is the change-of-measure factor.

If $G$ has a root at $\emp$ we write $G[\emp]$ for the rooted graph
with (random) weights assigned to each edge. Slightly
abusing notation, we denote the subgraph of a PWIT($\Pii$) formed by
the root $\emp$, its children, and the weighted edges in between, by
$\N[\emp]$.

\includegraphics[scale=.4]{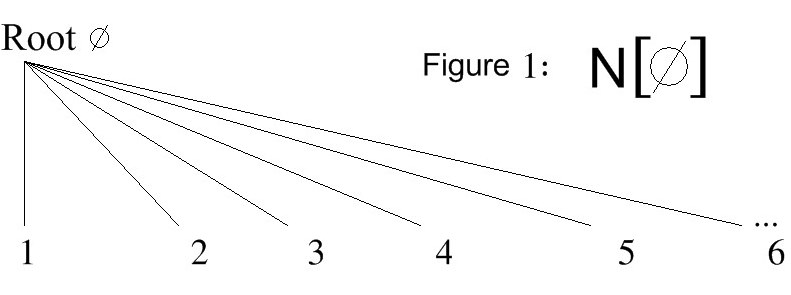}

We continue now with other generations. Every vertex $v$ in
generation (or depth) $g\ge 1$ has edges to an infinite number of children indexed
by $\N$ forming the subgraph $\N[v]$, with weights assigned by repeating the procedure for the weights in the first generation (for $\N[\emp]$), namely according
to the points of an {\it independent} Poisson random measure with
intensity $\Pii(dx)$. Therefore each $\N[v]$ is an i.i.d. copy of $\N[\emp]$.  The union of the children vertices of $\N[v]$ (in other words, not including $v$ itself) over all $v$ in some generation $g-1$ is denoted $\N^g$. Thus the total vertex set is \be
\label{vertices PWIT} \N^F:=\bigcup_{g\ge 0} \N^g \ee where
$\N^0=\emp$.

\vspace{3mm}
\includegraphics[scale=.45]{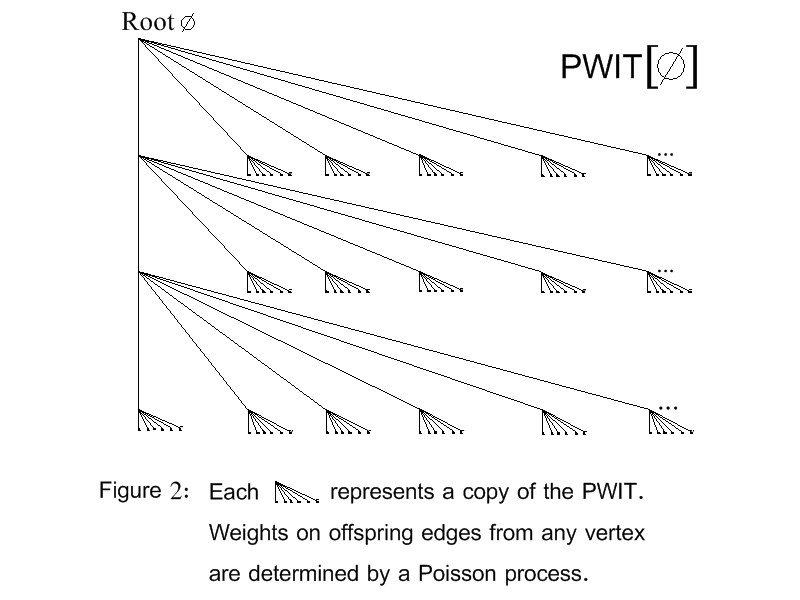}

The PWIST will depend on both characteristics $\sigma^2$ and $\Pi$
(via $\Pii$). To construct a PWIST$(\sigma,\Pii)$, we start with a
single PWIT$(\Pii)[\emp]$ rooted at $\emp$ and, for each vertex $v$
of PWIT$(\Pii)[\emp]$, we create a new vertex $\infty_v$ which is
the root of a new independent PWIT$(\Pii)[\infty_v]$. We draw an
edge from $v$ to $\infty_v$ for each $v$ and assign this edge  a
nonrandom weight of \be \label{distancea} 1/\sigma \in(0,\infty].
\ee Next, we create a new independent PWIT$(\Pii)[\infty_u]$ for
each vertex $u$ of each PWIT$(\Pii)[\infty_v]$, and draw an edge
with weight $1/\sigma $ between $u$ and $\infty_u$.  We continue
this procedure
ad infinitum.
If we also identify  $\infty_v$ with the integer $0$ so that by
concatenation, $\infty_v$ is written $v0$, then  we may write the
vertex set of a PWIST$(\sigma,\Pii)$ as
 \be\label{nof}
\N_0^F:=\bigcup_{g\ge 0} \N_0^g
 \ee
 where $\N_0=\N\cup\{0\}$ and by concatenation we write $v=v_1v_2\cdots v_g\in\N_0^g$.
As can be seen in the figure below, edges with the weight $1/\sigma$ connect infinitely many PWITs with a
 backbone structure in order to form a PWIST.

\includegraphics[scale=.42]{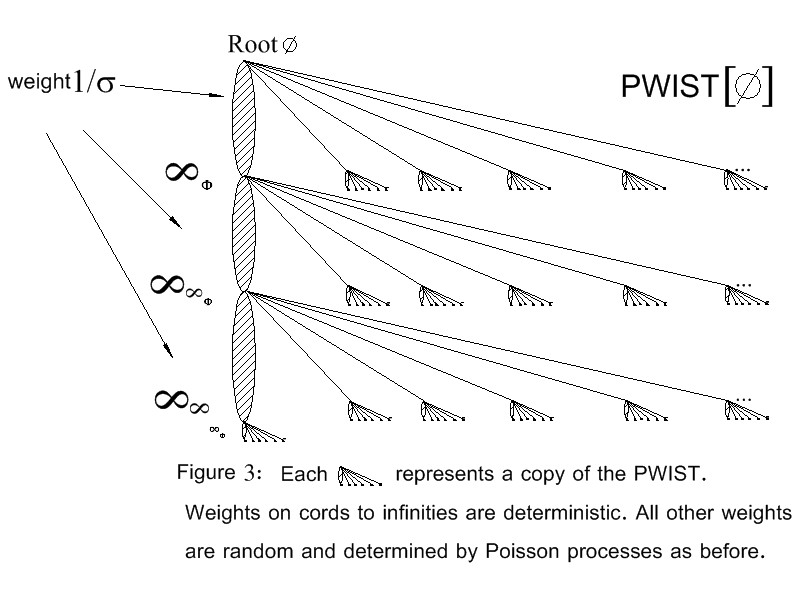}

Our next theorem justifies the choice \eqref{distancea} for the
weight on the edge between $v$ and $\infty_v$. Let us however attempt a
brief heuristic explanation as to why this is the correct weight to
assign to this edge. First of all, identify each weight with its
absolute value so that all weights are thought of as {\it
nonnegative conductances}. Now, if $\sigma=0$, then the connected
graph containing the root $\emp$ is simply a PWIT$(\Pii)$ with the
weights on edges representing nonnegative conductances. If
$\sigma>0$, we use the interpretation that $v$ and $\infty_v$ are
infinitely far apart, but also that there are infinitely many
parallel edges (or a multi-edge) between $v$ and $\infty_v$. Since distance is
equivalent to resistance on electrical networks and resistance is
the reciprocal of conductance, the conductance of each parallel edge
is zero; however, their collective {\it effective} conductance is greater than $0$, and in particular is of order $\sigma$. We can thus identify the multiple parallel edges with a {\it single} edge between $v$ and $\infty_v$ called
a {\bf cord to infinity} with effective resistance $1/\sigma $.

Let us now consider a random weighted adjacency matrix $\cc_{G_{n}}$
 associated to a complete {\bf rooted geometric graph} (see Section \ref{sec:nowigner} for definitions)
$G_{n}=G_{n}[\emp]=(V_n,E_n,\r_n)$ where $V_n=\{1,\ldots,n\}$ and
$\r_n$ are the (possibly signed) random weights/lengths/resistances of the edges
$E_n$. We refer to such a real-valued matrix as a {\it random conductance matrix}
with entries given simply by the reciprocals of the
signed resistances:
\begin{equation}
\label{eq10}
\cc_{G_n}(j,k):=\frac{1}{\r_n(j,k)} .
\end{equation}
When a sequence of random
conductance matrices satisfies
\eqref{LKcondition} or \eqref{LKcondition2}, it forms a \LK\ random matrix ensemble.

This notion generalizes to a {\bf random conductance operator} on
$L^2(G_\infty)\equiv L^2(V_{\infty})$ for an infinite weighted graph
$G_\infty=(V_\infty,E_\infty,\r_\infty)$. Let the core $\core\subset
L^2(V_\infty)$ be the set of vectors with finite support, i.e., all
finite linear combinations of the basis vectors $e_v$ which are $1$ at $v$ and $0$ elsewhere. We
consider the operator on $\core$ which is defined by \be \label{aPWIST}
\cc_{G_\infty}(u,v) =\la e_u, \cc_{G_\infty}e_v \ra:=
\begin{cases}
& 1/\r_\infty(u,v)\quad \text{if } u\sim v\\
& 0 \quad\quad \quad \quad\quad\  \text{otherwise}.
\end{cases}
\ee
This operator is closable as a graph in $L^2(V_{\infty})\times L^2(V_{\infty})$ since it is symmetric, i.e., Hermitian and densely defined \cite[Thm 5.4]{weidmann1980linear}. Abusing notation we also denote
its unique closure by $\cc_{G_\infty}$. In particular, we will see that the closure is self-adjoint.
In the case where $G_\infty$ is a $\text{PWIST}(\sigma,\Pii)$,
by \eqref{pi}, the conductances are given by the points of a Poisson
random measure with symmetric intensity $\Pi(dx)$ on $\Roo$.

Now, recall \cite[Sec. VII.2 and VIII.3]{reed1980methods} that the spectral measure $\mu_\varphi$ of a self-adjoint operator $\cc$  associated to the vector $\varphi$
is defined by the relation
$$\langle \varphi, f(\cc)\varphi\rangle =:\int_\R f(x) \mu_\varphi(dx), \quad \text{for bounded continuous } f.$$

\begin{thm}[LSD as the root spectral measure of a limiting operator]\label{thm:main2}
For any \LK\ ensemble $\(\cc_{n}\)_{n\in\N}$ with characteristics
$(\sigma^2,0,\Pi)$, the limiting spectral distribution
$\Spec_{\cc_\infty}$ of Theorem \ref{thm:main} is the expected spectral measure, at the root vector $e_\emp$, 
of a self-adjoint random conductance operator $\cPWIST$ on
$L^2(\N_0^F)$ where $G_\infty$ is a PWIST($\sigma,\Pii$).
\end{thm}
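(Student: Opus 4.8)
The plan is to identify $\Spec_{\cc_\infty}$ through its Stieltjes transform, to realize that transform as the expected diagonal resolvent entry at the root of the limiting operator, and to establish the needed convergence of diagonal resolvent entries by a local-weak-convergence argument carried through a recursive (cavity) relation on the tree. Since by Theorem~\ref{thm:main} the measures $\Spec_{\cc_n}$ already converge weakly almost surely to the deterministic $\Spec_{\cc_\infty}$, and since a probability measure on $\R$ is determined by its Stieltjes transform $g(z)=\int_\R(x-z)^{-1}\,\Spec_{\cc_\infty}(dx)$ for $\operatorname{Im}z>0$, it suffices to prove that for every such $z$
\be
\label{plan1}
\E\big\langle e_1,(\cc_{G_n}-z)^{-1}e_1\big\rangle\;\longrightarrow\;\E\big\langle e_\emp,(\cPWIST-z)^{-1}e_\emp\big\rangle .
\ee
Indeed the left-hand side equals $\E\big[\tfrac1n\tr(\cc_{G_n}-z)^{-1}\big]$ because the law of $\cc_{G_n}$ is invariant under permutations of $\{1,\dots,n\}$ (entries i.i.d.\ up to symmetry), this averaged quantity is bounded by $1/\operatorname{Im}z$ and converges a.s.\ to $g(z)$, and — once the closure of \eqref{aPWIST} is known to be self-adjoint — the right-hand side equals $\int_\R(x-z)^{-1}\,\E\mu_{e_\emp}(dx)$ with $\E\mu_{e_\emp}$ a probability measure; Stieltjes inversion then yields $\Spec_{\cc_\infty}=\E\mu_{e_\emp}(\cPWIST)$.

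\textbf{Step 1 (local structure near the root).} Fix a truncation level $\eps>0$ and split each row of conductances according to whether $|\cc_n(j,k)|\ge\eps$ or $|\cc_n(j,k)|<\eps$. For the ``$\ge\eps$'' part, hypothesis \eqref{LKcondition} restricted to $\{|\cc_n(1,k)|\ge\eps\}$ together with exchangeability within a row is exactly the input behind the classical Poisson convergence used to construct Aldous' PWIT: the point process of these conductances, inverted via \eqref{pi}, converges to a Poisson process on $\Roo$ of intensity $\Pii$ restricted to $\{|x|\le 1/\eps\}$, and iterating generation by generation (the first-generation vertices' disjoint neighborhoods being asymptotically independent, again by exchangeability) gives local weak convergence of the $\ge\eps$-truncated rooted geometric graph to the corresponding truncation of $\text{PWIT}(\Pii)[\emp]$. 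For the ``$<\eps$'' part, \eqref{LKcondition2} and \eqref{levymeasure} give
\be
\label{plan2}
\sum_{k=1}^n|\cc_n(1,k)|^2\,\mathbf{1}_{\{|\cc_n(1,k)|<\eps\}}\ \distr\ \sigma^2+\int_{|x|<\eps}x^2\,\Pi(dx)\ \xrightarrow{\eps\downarrow0}\ \sigma^2 ,
\ee
and each such conductance has size $O(1/\sqrt n)$ in the Wigner regime. I would then show, at the level of the resolvent recursion, that this cloud of $O(1/\sqrt n)$ conductances attached to a vertex $v$ behaves in the limit exactly like a \emph{single} additional neighbour $\infty_v$ joined to $v$ by an edge of conductance $\sigma$ (resistance $1/\sigma$, cf.\ \eqref{distancea}) carrying an independent copy of the whole structure. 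Reassembling the two parts and letting $\eps\downarrow0$ produces $\text{PWIST}(\sigma,\Pii)[\emp]$.

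\textbf{Step 2 (transfer to the resolvent via the cavity recursion).} Since $\text{PWIST}(\sigma,\Pii)$ is a tree (the cords to infinity create no cycles), the diagonal resolvent entry at the root satisfies a recursive distributional equation expressing $\langle e_\emp,(\cPWIST-z)^{-1}e_\emp\rangle$ as $\big(-z-\sum_{v\sim\emp}c_v^2\,R_v\big)^{-1}$, where $v$ ranges over the neighbours of $\emp$ (including $\infty_\emp$, with $c_{\infty_\emp}=\sigma$), $c_v=1/\r_\infty(\emp,v)$ is the conductance of the edge $\{\emp,v\}$, and the $R_v$ are independent copies of the analogous quantity for the subtree beyond $v$ with $\emp$ removed; the same recursion holds for the finite matrices $\cc_{G_n}$ up to an error controlled by $1/\operatorname{Im}z$. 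Truncating the tree at depth $R$ (setting $R_v\equiv0$ at depth $R$, i.e.\ replacing $\cPWIST$ by the operator $\cc_{[G_\infty]_R}$ of the finite ball $B(\emp,R)$), the quantity $\langle e_\emp,(\cc_{[G_\infty]_R}-z)^{-1}e_\emp\rangle$ is a bounded continuous functional of the finite rooted weighted graph $B(\emp,R)$, so Step 1 gives, for each fixed $R$, $\E\langle e_1,(\cc_{[G_n]_R}-z)^{-1}e_1\rangle\to\E\langle e_\emp,(\cc_{[G_\infty]_R}-z)^{-1}e_\emp\rangle$ as $n\to\infty$. One then lets $R\to\infty$: on the right, self-adjointness of $\cPWIST$ yields $\langle e_\emp,(\cc_{[G_\infty]_R}-z)^{-1}e_\emp\rangle\to\langle e_\emp,(\cPWIST-z)^{-1}e_\emp\rangle$, while on the left a mild uniform-integrability input — for instance a second-moment bound on $\langle e_1,(\cc_{G_n}-z)^{-1}e_1\rangle$ obtained from the same tail-truncation as in Theorem~\ref{thm:main} — lets one pass the depth cutoff. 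Combining the limits in $\eps$, $R$ and $n$ by a three-parameter $\eps$-argument yields \eqref{plan1}; that $\Spec_{\cc_\infty}$ has bounded support iff $\Pi$ is trivial is inherited from Theorem~\ref{thm:main}.

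\textbf{Step 3 and the main obstacle.} A prerequisite used above is self-adjointness of the closure of \eqref{aPWIST} for $G_\infty=\text{PWIST}(\sigma,\Pii)$: for $\sigma=0$ this is the PWIT operator and follows from a deficiency-index computation on the tree tuned to the intensity $\Pii$, while for $\sigma>0$ the extra cords of finite resistance $1/\sigma$ only lengthen the tree and can be absorbed into the same computation (or treated as a controlled perturbation). The hard part of the whole argument is the treatment of the sub-$\eps$ conductances in Step 1: proving rigorously, and uniformly enough to survive the interchange of the $n\to\infty$, $\eps\downarrow0$ and $R\to\infty$ limits, that the $O(1/\sqrt n)$-conductance cloud around each vertex condenses into exactly one cord of effective resistance $1/\sigma$ carrying an independent PWIST. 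Heuristically this is ``infinitely many vanishing parallel conductances with collective effective conductance $\sigma$'', but one must establish the self-averaging of the quadratic forms $\sum_u|\cc_n(v,u)|^2R_u$ and the asymptotic decoupling of the far neighbours' resolvents with quantitative error bounds — essentially a cavity-method estimate in the spirit of \cite{bordenave2010resolvent,bordenave2011spectrum}, but now carrying the Wigner mass $\sigma^2$ through the recursion rather than discarding it.
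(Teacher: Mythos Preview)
Your high-level strategy---reduce to Stieltjes transforms via exchangeability, establish local weak convergence, and invoke self-adjointness of the limiting tree operator---is correct and matches the paper. You also correctly isolate the crux as the treatment of the sub-$\eps$ conductances: showing that the cloud of $O(1/\sqrt n)$ entries attached to each vertex condenses, in the resolvent, into a single cord of conductance $\sigma$ carrying an independent copy of the whole structure. But you do not prove this; you label it ``the main obstacle'' and gesture toward a cavity self-averaging estimate without supplying one. The estimate you would need---that $\sum_k |\cc_n(v,k)|^2\mathbf 1_{\{|\cc_n(v,k)|<\eps\}}R_k(z)$ concentrates around $\sigma^2\,\E R(z)$ for an independent copy $R$ of the root resolvent---is delicate because the $R_k$ are not independent of the small conductances (they share matrix entries), and this dependence must be controlled uniformly in the tree depth and in $\eps$. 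As written, the proposal is a plan whose central step is left open.

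The paper resolves this difficulty by a quite different device, which sidesteps the resolvent self-averaging entirely. Rather than proving the condensation analytically, it \emph{imposes} it at the matrix level via the replacement procedure of Section~\ref{replacement}: in each row the small entries $\{|\cc_n(j,k)|\le h_n\}$ are literally replaced by a single entry equal to $\sigma$ at a random position and zeros elsewhere, producing a modified sequence $(\cc_n^\sigma)$ that now satisfies the uniform-square-integrability condition~\eqref{eq:ui}. With \eqref{eq:ui} in hand, Theorem~\ref{thm:lwc to spectrum} applies directly to $(\cc_n^\sigma)$ and yields convergence to the PWIST root spectral measure (Proposition~\ref{prop:LWC}, Lemma~\ref{SelfAdjointness}). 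The burden is thus shifted to showing that replacement does not change the expected LSD, and this is Proposition~\ref{lemma:same}, proved by the moment method---but only under an a.s.\ uniform bound $|\cc_n(1,2)|\le\tau$, needed so that the LSD is moment-determined (Proposition~\ref{prop:moment bound}). That bound is imposed by truncating at levels $\tau_m\to\infty$ and removed at the end via a rank estimate (Lemma~\ref{LDP lemma}) together with Lidskii's inequality~\eqref{lidskii}. So the paper's route is: truncate $\to$ replace $\to$ local weak convergence (now effectively in the $\sigma=0$ regime) $\to$ moment invariance of the replacement $\to$ remove truncation; the ``condensation'' you wanted to prove is never established directly but is instead encoded combinatorially in the moment computation of Proposition~\ref{lemma:same}.
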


\noindent {\bf Remarks:}
\begin{enumerate}
\item The symmetry of the measure $\Spec_{\cc_\infty}$ is now easy to see, since every PWIST($\sigma,\Pii$) is a tree, and thus the odd moments of
$\Spec_{\cc_\infty}$ vanish.
\item The above matrix ensembles can be decomposed, by the L\'evy-It\=o decomposition, into $(\cc_n)_{n\in\N}$ and $(\cc'_n)_{n\in\N}$ which are independent with characteristics $(0,0,\Pi)$ and $(\sigma^2,0,0)$. The sequence $(\cc_n+\cc'_n)_{n\in\N}$ then
has characteristics $(\sigma^2,0,\Pi)$. One approach is to try and generalize Voiculescu's asymptotic freeness theorem to establish the above result, however, we have been unable to do so due to the randomness of the PWIT associated to the L\'evy measure $\Pi$ (if the graphs were deterministic, one could use the approach of \cite{accardi2007decompositions}).
\end{enumerate}

The following result is an application of the resolvent identity,
and it may be used in conjunction with Theorem \ref{thm:main2} to further analyze $\Spec_{\cc_\infty}$. It can be viewed as an operator version of the Schur complement formula.

\begin{prop}[Recursive distributional equation]\label{cor1}
Suppose $G_\infty$ is a PWIST($\sigma,\Pii$). For all $z\in\C_+$ the
random variable
$$R_{\emp\emp}(z):=\la e_{\emp},(\cPWIST-zI)^{-1}e_{\emp} \ra$$
satisfies $R_{\emp\emp}(-\bar z)=-\bar R_{\emp\emp}(z)$ and the recursive distributional equation (RDE)
\be
\label{eq:rde} R_{\emp\emp}(z) \ed -\(z+ \sigma^2 R_{00}(z) + \sum_{k\in\N}|\cc(k)|^2 R_{kk}(z) \)^{-1} \ee
where for all  $k\ge 0$, $R_{kk}$ has the same distribution as $R_{\emp\emp}$
and $\{\cc(k)\}_{k\in\N}$ are the points of an independent Poisson random measure with intensity  $\Pi(dx)$ on $\Roo$.
\end{prop}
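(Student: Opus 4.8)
The plan is to derive the RDE from the Schur complement (resolvent) formula applied to the self-adjoint operator $\cPWIST$ at the vertex $\emp$, using the tree structure of the PWIST to eliminate all off-diagonal resolvent entries, and to read off the symmetry relation from self-adjointness together with the bipartiteness of the tree. Fix $z$ with $\Im z>0$, write $\cc:=\cPWIST$, and let $\cc^{(\emp)}$ be the conductance operator on $L^2(\N_0^F\setminus\{\emp\})$ obtained by deleting $\emp$. A preliminary remark is that the right-hand side of \eqref{eq:rde} is a.s.\ well defined: since the first-generation conductances $\{\cc(k)\}_{k\in\N}$ are the points of a Poisson random measure of intensity $\Pi$ on $\Roo$ and $\Pi$ satisfies \eqref{levymeasure}, one has $\sum_{k\in\N}|\cc(k)|^2<\infty$ a.s.\ (only finitely many points of modulus exceeding $1$, and $\E\sum_k|\cc(k)|^2\mathbf 1_{|\cc(k)|\le1}=\int_{|x|\le1}x^2\,\Pi(dx)<\infty$); this is the finiteness underlying \eqref{LKcondition2}. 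Moreover self-adjointness of $\cc$ gives $\|(\cc-zI)^{-1}\|\le 1/\Im z$ and $\Im\langle e_v,(\cc-zI)^{-1}e_v\rangle>0$ at every vertex $v$ (the spectral measure at $e_v$ being a probability measure on $\R$), so the series $\sum_k|\cc(k)|^2R_{kk}(z)$ converges absolutely and $z+\sigma^2R_{00}(z)+\sum_k|\cc(k)|^2R_{kk}(z)$ has imaginary part $\ge\Im z>0$.

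The structural step is the decomposition of $G_\infty\setminus\{\emp\}$. Since a PWIST is a tree, deleting $\emp$ disconnects it into one component per neighbor of $\emp$: the subtree hanging off the $k$th child, for each $k\in\N$, and the subtree hanging off $\emp 0=\infty_\emp$, joined to $\emp$ by the cord of conductance $\cc(\emp,\emp 0)=\sigma$ (absent exactly when $\sigma=0$, in which case the corresponding term vanishes). By the recursive description of a $\text{PWIST}(\sigma,\Pii)$ --- the subtree of a $\text{PWIT}(\Pii)$ rooted at any vertex is again a $\text{PWIT}(\Pii)$, and cords to infinity together with their pendant PWITs are attached recursively --- each of these components is itself distributed as a $\text{PWIST}(\sigma,\Pii)$, the countably many components are built from disjoint, hence independent, parts of the Poisson construction, and they are independent of $\{\cc(k)\}_k$ and of $\sigma$. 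Thus $\cc^{(\emp)}=\bigoplus_{v\sim\emp}\cc_{G_\infty^{(v)}}$ is a direct sum of independent conductance operators of PWISTs, each self-adjoint on its finite-support core (as in the discussion preceding Theorem \ref{thm:main2}); hence $\langle e_v,(\cc^{(\emp)}-zI)^{-1}e_v\rangle$ has the law of $R_{\emp\emp}(z)$ --- these are the $R_{00}$ and $R_{kk}$ of the statement --- and, crucially, $\langle e_u,(\cc^{(\emp)}-zI)^{-1}e_v\rangle=0$ whenever $u\ne v$ are both neighbors of $\emp$, since they lie in different summands.

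Now assemble the identity. Write $\cc=\cc^{(\emp)}+V$, where $V=|\psi\rangle\langle e_\emp|+|e_\emp\rangle\langle\psi|$ with $\psi:=\sigma e_{\emp 0}+\sum_k\cc(k)e_k\perp e_\emp$, $\|\psi\|^2=\sigma^2+\sum_k|\cc(k)|^2<\infty$; since $\cc(\emp,\emp)=0$, the Schur complement formula for the resolvent with respect to the one-dimensional subspace $\C e_\emp$ --- valid because $V$ is bounded of finite rank, so this is a genuine finite-rank perturbation and the formula follows from the second resolvent identity --- gives $R_{\emp\emp}(z)=\bigl(-z-\langle\psi,(\cc^{(\emp)}-zI)^{-1}\psi\rangle\bigr)^{-1}$. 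Expanding $\psi$ and using the vanishing cross terms, $\langle\psi,(\cc^{(\emp)}-zI)^{-1}\psi\rangle=\sigma^2R_{00}(z)+\sum_{k\in\N}|\cc(k)|^2R_{kk}(z)$ (the conductances being real, $\cc(k)^2=|\cc(k)|^2$), which is exactly \eqref{eq:rde}. For the symmetry relation, self-adjointness gives $R_{\emp\emp}(\bar z)=\overline{R_{\emp\emp}(z)}$, and the diagonal unitary $Ue_v:=(-1)^{\dist(\emp,v)}e_v$ satisfies $U\cc U=-\cc$ (as $\cc$ connects only vertices at tree-distance $1$), whence $(\cc-zI)^{-1}=-U(\cc+zI)^{-1}U$ and, since $Ue_\emp=e_\emp$, $R_{\emp\emp}(z)=-R_{\emp\emp}(-z)$; combining, $R_{\emp\emp}(-\bar z)=-R_{\emp\emp}(\bar z)=-\overline{R_{\emp\emp}(z)}$.

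The step I expect to demand the most care is making the operator-level Schur complement rigorous for the a.s.\ unbounded self-adjoint operator $\cPWIST$: one must verify that $\cc^{(\emp)}$, equivalently each component's conductance operator, is self-adjoint on its natural core, so that $\cPWIST=\cc^{(\emp)}+V$ is a bounded finite-rank self-adjoint perturbation and the resolvent manipulation is legitimate; in parallel one must spell out the recursive identification showing the components of $G_\infty\setminus\{\emp\}$ are genuine independent copies of a $\text{PWIST}(\sigma,\Pii)$ with root-resolvent law that of $R_{\emp\emp}(z)$. The remaining points --- convergence of the random series, the direct-sum and cross-term bookkeeping, and the two symmetry identities --- are routine once the a.s.\ finiteness $\sum_k|\cc(k)|^2<\infty$ and the self-adjointness facts from the excerpt are available.
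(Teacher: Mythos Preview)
Your proposal is correct and is precisely the approach the paper has in mind: the paper's own proof consists of the single sentence ``The proof is an application of the resolvent identity'' together with pointers to Proposition 2.1 in \cite{klein1998extended} and Theorem 4.1 in \cite{bordenave2011spectrum}, and what you have written is exactly the Schur complement/resolvent-identity argument carried out in those references, specialized to the PWIST. Your handling of the two points the paper leaves implicit --- that the components of $G_\infty\setminus\{\emp\}$ are independent copies of a $\text{PWIST}(\sigma,\Pii)$ (so Lemma \ref{SelfAdjointness} supplies self-adjointness of $\cc^{(\emp)}$ and the rank-two perturbation is legitimate), and that bipartiteness of the tree yields $R_{\emp\emp}(z)=-R_{\emp\emp}(-z)$ and hence the stated symmetry --- is sound.
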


\noindent {\bf Remarks:}
\begin{enumerate}
\item One can extend the proposition to the Wishart ensembles (as in Corollary \ref{cor2}) using Lemma 2.5 in \cite{bordenave2011nonHermitian}.
\item
For an example of how the above proposition may be used, consider Wigner matrices with i.i.d. entries with possibly infinite second
moments, but normalized to be in the Gaussian domain of attraction. In this case, the L\'evy measure $\Pi$ is trivial and the PWIST($\sigma,0$) is just $\N$ rooted at $1$.

\includegraphics[scale=.4]{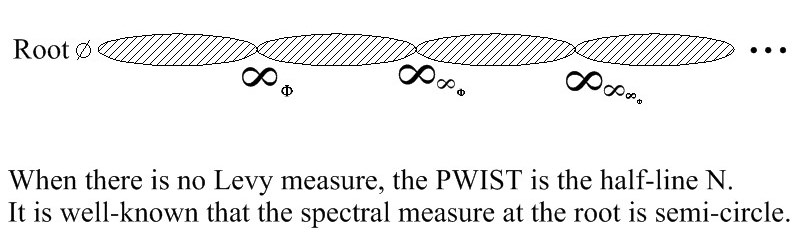}

\noindent Since the edge-weights of the limiting graph are nonrandom, a simple argument shows (see Eq. \eqref{eq:resolvent identity} below) that
the resulting recursive equation is the \CS\ transform (see \eqref{def:Stieltjes}) of Wigner's semi-circle law: $$R_{\emp\emp}(z)=S_{\mu_{sc}}(z) = -\(z+ \sigma^2 S_{\mu_{sc}}(z)\)^{-1}.$$
\end{enumerate}

The rest of the paper is organized as follows. In the next section, we introduce a replacement procedure which creates a new sequence of
matrices by modifying a given \LK\ ensemble.
This modification replaces complex values with real values and also embodies our notion of ``cords to infinity''. It is the key procedure which allows us to generalize
PWITs to PWISTs.
In Section \ref{sec:moment method}, the moment method is used to prove a weak version of Theorem \ref{thm:main}
in the case that the L\'evy measure $\Pi$ has bounded support.  The main point
of Section \ref{sec:moment method}, however, is to show that the limiting root spectral measure of a \LK\ ensemble is invariant under the replacement procedure
of Section \ref{replacement} (in preparation for proofs of the main results). In Section \ref{sec:nowigner}, we precisely define local weak convergence and
present an adaptation of the arguments of \cite{bordenave2011spectrum}.  In particular, we show that the local weak convergence argument proves Theorem \ref{thm:main2} for real \LK\ ensembles with
$\sigma=0$.  Finally, in Section \ref{sec:proofs}, we combine the arguments of Sections \ref{sec:moment method} and \ref{sec:nowigner} to
prove the main results in the general case.  In the appendix we gather some known results which are needed along the way.

\paragraph{Acknowledgments} This project arose out of several discussions with Shannon Starr.
We thank him for being a sounding board and for his many helpful discussions,  insights, and suggestions.

\section{A replacement procedure for cords to infinity}\label{replacement}

In this section, we define an important sequence of {\it modified} matrices $\(\cc^{\sigma}_n\)_{n\in\N}$ which play a key role in the proofs of the main results.
In particular, these matrices are
modifications of  a \LK\ ensemble $\(\cc_n\)_{n\in\N}$ under a
certain replacement procedure which we describe below.

For $h>0$, by  \eqref{LKcondition} and Proposition \ref{Kall
cor15.16} we have that as $n\to\infty$,
$$\sum_{k=1}^n \pm|\cc_n(1,k)|1_{\{|\cc_n(1,k)|\le h\}} $$ converges in distribution to  $ID(\sigma^2_h,0,\Pi_h)$
where the $\pm$ signs are chosen using independent Rademacher variables (independent also from $\cc_n$), and
\bea\nn
\sigma^2_h&:=&\sigma^2+\int_{|{{} x}|\le h}{{{} x}^2}\, \Pi(d{{} x})\quad\text{ and}\\
\nn \Pi_h(dx)&:=&1_{(-\infty,-h]\cup[h,\infty)}(x)\Pi(dx).
\eea
By a diagonalization argument, we may choose a sequence of positive numbers $h_n\to 0$ such that
we get the following weak convergence to a Gaussian:
$$\sum_{k=1}^n \pm|\cc_n(1,k)|1_{\{|\cc_n(1,k)|\le h_n\}} \Rightarrow \NN(0,\sigma^2).$$
In particular, as $h_n\to 0$
 \bea\label{eq:kal variance}
&&\lim_{n\to\infty}\sum_{k=2}^n \E\(|\cc_{n}(1,k)|^2 1_{\{|\cc_n(1,k)|\le h_n\}}\)\nn\\
&=&\lim_{n\to\infty} n\E\(|\cc_{n}(1,2)|^21_{\{|\cc_n(1,2)|\le h_n\}}\)=\sigma^2. \eea

Our replacement procedure is as follows. For all entries such that
$|\cc_n(j,k)|> h_n$ as well as for all diagonal entries
$\cc_n(j,j)$, we set $\cc^{\sigma}_n(j,k):=\pm|\cc_n(j,k)|$ where the signs $\pm$ are given by independent Rademacher variables on the upper triangle, and determined
on the lower triangle to preserve self-adjointness.
However, the  entries in positions $(j,k)$, $j\neq k$ in the matrix
$\cc^{\sigma}_n$,  which satisfy the condition $|\cc_n(j,k)|\le h_n$,
will remain blank for
now and will be assigned values that are either $0$ or $\sigma$.

We next describe how to fill in blank entries. We first need to determine the order of the rows (and columns to preserve self-adjointness) by which we fill in the blanks.
Recall that
$\cc_n$ determines a  geometric graph, rooted at $1$, with edge-weights
given by $1/\cc_n(j,k)$ as in \eqref{eq10}.
Let $\alpha$ be
the permutation of $\{1,\ldots,n\}$ such that $\alpha(i)$
is the $i$th closest vertex from the root $1$ using the distance
\eqref{distance}. If $j$ and $k$ are at equal distance from the root $1$, we break ties by deeming $j$ ``closer'' to the root whenever $j<k$.
We now fill in blank entries according to the order determined by the (random) permutation $\alpha$. For instance, we fill in blanks in row $1$ first since $\alpha(1)=1$
(the root is always closest to itself). Next we fill in blank entries in the row $\alpha(2)$, then row $\alpha(3)$, etc.

The procedure for filling in blank entries in  row $j=\alpha(i)$ is as follows, starting  with row $1=\alpha(1)$. Out of all $k$ satisfying
\be\label{eqk} |\cc_n(j,k)|\le h_n,  \quad k\neq j\ee
choose one uniformly at random and set this entry,  in
$\cc^{\sigma}_n$, to
$\sigma$.
Set other blank entries in  row $j$, satisfying \eqref{eqk}, to zero
 in the matrix $\cc^{\sigma}_n$.
 This completes the
filling of row $j$ of $\cc^{\sigma}_n$, and we use
the symmetry condition
$\cc^{\sigma}_n(j,k)=\cc^{\sigma}_n(k,j)$, to fill in blank entries
in the column $j$.

When row and column $j=\alpha(i)$ are completely filled, we
repeat the procedure on row and column $\alpha(i+1)$.
We
continue the replacement procedure described in the previous paragraph until all blank
entries have been filled, then we say call $\(\cc^{\sigma}_n\)_{n\in\N}$ the  {\bf modified sequence of matrices}.

\section{The moment method}\label{sec:moment method}
In this section, we use the moment method to prove  a {\it convergence in expectation}\footnote{See \cite[Remark 2.4.1]{tao2012topics} for a definition and short discussion of this type of convergence.} version of Theorem \ref{thm:main} in the case where
there exists an almost sure bound $0<\tau <\infty$ on the entries of the \LK\ ensemble $\(\cc_{n}\)_{n\in\N}$, 
\be\label{finite nu moments}
|\cc_n(1,2)|\le\tau\quad \text{for all } n.
\ee
In particular, using the associated Poisson approximation for the distribution of $\cc_n(1,2)$ (see \cite[Cor. 15.16]{kallenberg2002foundations}) one sees that $\Pi$ must be supported on $[-\tau,\tau]$.

Let $$M_p(\mu):= \int_\R x^p \mu(dx)$$
be the
$p$th moment of the measure $\mu$.
The moment method
in this section consists of showing
\be\label{moments}
\lim_{n\to\infty} M_p(\E\Spec_{\cc_n}) = M_p(\E\Spec_{\cc_\infty}),\quad\text{for all }p\in\N,
\ee
and then verifying that the moments  $M_p(\E\Spec_{\cc_\infty})$ determine $\E\Spec_{\cc_\infty}$.
However, the main result of this section is the following important consequence of such a verification.

\begin{prop}[Invariance of expected LSD under replacement procedure]\label{lemma:same}
If the expected LSD for a \LK\ ensemble $\(\cc_n\)_{n\in\N}$ exists and is determined by its moments, then it is
equal to the limiting expected spectral measure associated to $e_1$ (the first vector of the standard basis)
for any modified sequence $\(\cc_n^{\sigma}\)_{n\in\N}$. 
\end{prop}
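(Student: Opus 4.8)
The plan is to prove this via the moment method, showing that the two matrix sequences $(\cc_n)_{n\in\N}$ and $(\cc^\sigma_n)_{n\in\N}$ have the same limiting moments of the expected spectral measure at $e_1$ (respectively, of $\E\Spec_{\cc_n}$). Since $\Spec_{\cc_n}=\frac{1}{n}\sum_j\mu_{e_j,\cc_n}$ is the average of the spectral measures at the basis vectors, and by exchangeability of the rows/columns (up to the root) one has $M_p(\E\Spec_{\cc_n})=\E\la e_1,\cc_n^p e_1\ra$ up to an error that vanishes; I will actually work with $\E\la e_1,\cc_n^p e_1\ra$ and $\E\la e_1,(\cc^\sigma_n)^p e_1\ra$ directly. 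The first step is to expand each $p$th moment combinatorially: $\E\la e_1,\cc_n^p e_1\ra=\sum_{\gamma}\E\prod_{i=1}^p\cc_n(\gamma_{i-1},\gamma_i)$, the sum over closed walks $\gamma=(1=\gamma_0,\gamma_1,\dots,\gamma_p=1)$ of length $p$ in $\{1,\dots,n\}$, and similarly for $\cc^\sigma_n$. Standard moment-method bookkeeping (as in the references cited, and as will be developed earlier in this section) shows that only walks using each edge at least twice, and in which the ``skeleton'' is a tree on $O(1)$ vertices, contribute in the limit; all other walk-types contribute $o(1)$ because of the $1/n$ normalization implicit in the \LK\ condition.

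The core of the argument is then to compare, walk-type by walk-type, the contribution from $\cc_n$ with that from $\cc^\sigma_n$. Here I would split each edge-weight into its ``large'' part $|\cc_n(j,k)|1_{\{|\cc_n(j,k)|>h_n\}}$ and ``small'' part $|\cc_n(j,k)|1_{\{|\cc_n(j,k)|\le h_n\}}$. On the large part the replacement procedure does nothing except randomize signs; since $\Pi$ is symmetric and the walks contributing in the limit traverse each edge an even number of times, sign randomization leaves expectations of the relevant monomials unchanged. On the small part, the key identity is \eqref{eq:kal variance}: the total mass $\sum_k\E(|\cc_n(1,k)|^2 1_{\{|\cc_n(1,k)|\le h_n\}})\to\sigma^2$, while each individual term is $O(h_n^2)\to 0$ and higher moments of the truncated entry are $o(1/n)$-negligible in the relevant sums. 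In the modified matrix $\cc^\sigma_n$, exactly one entry per row (in a root-distance-ordered, hence asymptotically tree-respecting, fashion) is set to the deterministic value $\sigma$ and the rest to $0$; so in a contributing closed walk, a pair of traversals of a ``small'' edge contributes $\sigma^2$ in $\cc^\sigma_n$ with the right combinatorial multiplicity, matching the limiting contribution $\sigma^2$ coming from the variance of the small part in $\cc_n$. One must check that the random choice of which blank entry becomes $\sigma$, governed by the permutation $\alpha$, does not distort the tree structure of contributing walks: because only $O(1)$-size neighborhoods of the root matter and $h_n\to0$, with high probability the $\sigma$-edges selected along a contributing walk form (part of) a tree and are all distinct, so the bookkeeping goes through exactly as for a Wigner-type term superimposed on the sparse term.

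Assembling these comparisons shows $\lim_n \E\la e_1,\cc_n^p e_1\ra=\lim_n\E\la e_1,(\cc^\sigma_n)^p e_1\ra$ for every $p$; call this common value $m_p$. By hypothesis the expected LSD of $(\cc_n)_{n\in\N}$ exists and is determined by its moments, so $m_p=M_p(\E\Spec_{\cc_\infty})$ and the sequence $(m_p)_p$ uniquely determines a probability measure. The same moment sequence then forces the limiting expected spectral measure at $e_1$ for $(\cc^\sigma_n)_{n\in\N}$ to equal $\E\Spec_{\cc_\infty}$, which is the claim. One auxiliary point to verify along the way is that the spectral measure at $e_1$ and the averaged ESD have the same limiting moments for each of the two ensembles (again by the row-exchangeability/tree-truncation structure), so that ``determined by its moments'' transfers from $\E\Spec_{\cc_\infty}$ to the $e_1$-spectral measure limit of $(\cc^\sigma_n)$.

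The main obstacle I anticipate is the combinatorial control of the replacement step: showing that inserting one deterministic $\sigma$ per row, with the insertion location chosen by the root-distance permutation $\alpha$ and restricted to the (random, $h_n$-dependent) set of small entries, produces in the limit exactly the Wigner contribution $\sigma^2$ per doubly-traversed edge, with no extra or missing terms. This requires simultaneously (i) ruling out that the $\sigma$-edges create spurious cycles or coincidences within the $O(1)$-neighborhood of the root that would inflate moments, and (ii) confirming via \eqref{eq:kal variance} that the matched variance is precisely $\sigma^2$ and that all error terms from truncation and from higher moments of small entries are $o(1)$. Handling the interaction between the Poissonian ``large-edge'' skeleton and the deterministic $\sigma$-backbone inside a single closed walk — i.e., walks that alternate between sparse edges and $\sigma$-edges — is where the careful case analysis will be needed.
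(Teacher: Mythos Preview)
Your proposal is correct and takes essentially the same approach as the paper: moment method, split each entry at the threshold $h_n$, use \eqref{eq:kal variance} (together with the bound that higher even moments of the small part are $o(1/n)$) to show that small entries contribute exactly $\sigma^2$ per doubly-traversed edge, and match this with the single deterministic $\sigma$-edge per row in the modified matrix. The paper organizes the combinatorics by inducting on the steps of the replacement procedure (first comparing $\cc_n$ with the matrix in which only the cord from vertex $1$ has been inserted) and, for that step, recursively stripping the edges incident to $1$ from a contributing cycle to produce a list of sub-cycles $(L^{(1)},\ldots,L^{(M)})$ rooted at distinct vertices; but this is just a concrete bookkeeping device for the walk-by-walk comparison you outline, and the obstacle you flag (interaction of the $\sigma$-backbone with the Poissonian skeleton inside a single walk) is exactly what that device is designed to handle.
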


\begin{proof}

A standard argument (see \cite[Ch. 2]{anderson2010introduction} or \cite[Sec. 2.3.4]{tao2012topics} for details) shows that the  $p$-moments
are given by
\bea\label{m2p}
M_{p}(\E\Spec_{\cc_n})&=&\E\frac{1}{n} \tr(\cc_n^{p})\nn\\
&=&\sum_{j_2,\ldots,j_{p}=1}^n\E(\cc_n(1,j_2)\cc_n(j_2,j_3)\cdots \cc_n(j_{p},1))
\eea
where we have set $j_1=1$ by exchangeability. The ordered listings of subscript pairs $$\bigl((1,j_2)(j_2,j_3),\ldots,(j_p,1)\bigr)_{j_2,\ldots, j_p=1}^n$$ are viewed as distinct  paths  of length $p$ which start and end at $1$ in the complete graph on $\{1,\ldots,n\}$, with edges having orientations,
and with the possibility that edges are crossed multiple times. These paths are called {\it cycles rooted at $1$}.


We now make some preliminary observations in order to rewrite \eqref{m2p} as \eqref{last moment eq}.  The expression of the $p$th moment in \eqref{last moment eq} below allows us to then prove the result.

By Proposition 4 in \cite{zakharevich2006generalization}, in the limit as $n\to\infty$, the only cycles
that {\it contribute to the limiting sum}  on the right-side of \eqref{m2p}  are ``trees'' in the following sense. For a given {\it contributing} term, if the oriented edge $(j_k,j_{k+1})$ is crossed $q=q(k)$ times, then
it must also be crossed $q$ times in the opposite orientation. Thus, for each $k$ there is a corresponding $k'\neq k$ such that
\be\label{distinct ells}
\cc_n(j_k,j_{k+1})=\overline{\cc_n(j_{k'},j_{k'+1})}, \quad j_k=j_{k'+1},\ j_{k+1}=j_{k'}.
\ee
Moreover, the partition of $\{1,\ldots,p\}$ which pairs each $k$ with its corresponding $k'$ must be a {\it non-crossing pair partition}  (see \cite{nica2006lectures} for details).
In particular, $p$ must be even in order to have a non-trivial moment.

If $\cc_n(j_k,j_{k+1})$ appears $q=q(k)$ distinct times in a given term, then its conjugate (or reversed edge from $j_{k'}$ to $j_{k'+1}$) also appears $q=q(k')$ distinct times.
Using independence and exchangeability,  each term of the sum in \eqref{m2p} takes the form
\be\label{qi's}
\E|\cc_n(1,2)|^{2q_1}\E|\cc_n(1,2)|^{2q_2}\cdots \E|\cc_n(1,2)|^{2q_\ell}
\ee
where $2q_1+\cdots+2q_\ell=p$.

Fix the value of $j_2$ and consider a cycle rooted at $1$ corresponding to a term in the sum \eqref{m2p} such that $(1,j_2)$ is crossed $q=q(1)$ times in each direction for a total of $2q$ times. Removing
these $2q$ edges from our cycle leaves us with several sub-cycles.  These sub-cycles can be permuted and then concatenated to form
two sub-cycles $L$ and $\tL$ rooted at $L_1:=j_2$ and $\tL_1:=1$ which avoid the edges $(1,j_2)$ and $(j_2,1)$ (one or both of the cycles may be trivial).

\includegraphics[scale=.3]{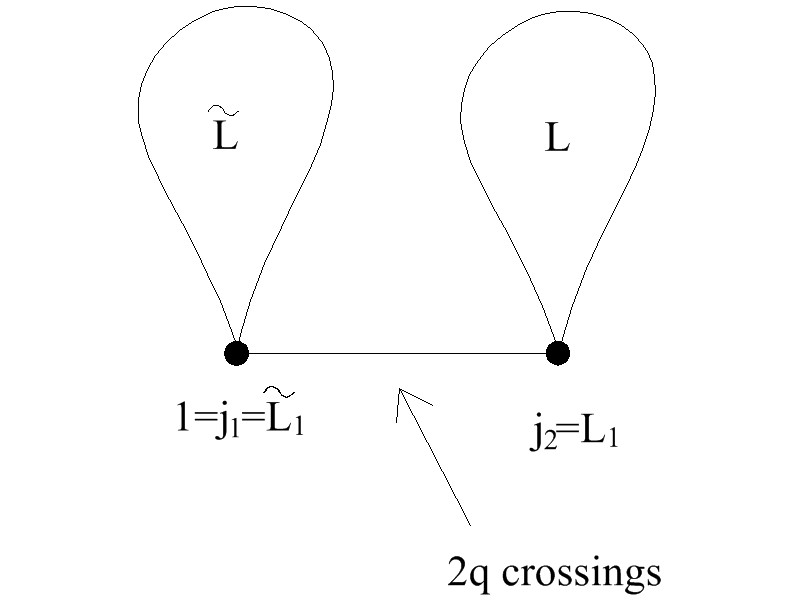}

Write $\mathbb{L}(j_2,q)$ for the set of all pairs of cycles $(L,\tL)$ which are possible, where in particular, different permutations/concatenations leading to the same $L$ or $\tL$ are each listed separately in $\mathbb{L}(j_2,q)$, i.e., $L$ and $\tL$ remember their original sub-cycle structure.
Also, let $s,\tilde s$ be the lengths of $L,\tL$ so that $s+\tilde s={p-2q}$, and write
 $L\equiv ((L_1, L_2), \ldots, (L_{s}, {L_{1}}))$ and similarly for $\tL$.
Discarding some terms which do not contribute to the limiting sum, we have that
 \eqref{m2p} can be rewritten as
\bea\label{condition on hn}\nn
&&\sum_{q=1}^{p/2}\sum_{j_2=2}^n\sum_{(L,\tL)\in\mathbb{L}(j_2,q)}\E|\cc_n(1,j_2)|^{2q}\E(\cc_n(L_1, L_2)\cdots \cc_n(L_{s},L_1)\cc_n(\tL_1,\tL_2)\cdots\cc_n(\tL_{\tilde s},\tL_1))\\
\nn&=&\sum_{q=1}^{p/2}\sum_{j_2=2}^n\{ \E\[ |\cc_n(1,j_2)|^{2q}(1_{\{|\cc_n(1,j_{2})|\le h_n\}}+1_{\{|\cc_n(1,j_{2})|>h_n\}})\]\times \\
&&\sum_{L,\tL\in\mathbb{L}(j_2,q)}\E\(\cc_n(L_1, L_2)\cdots \cc_n(L_{s},L_1)\cc_n(\tL_1,\tL_2)\cdots\cc_n(\tL_{\tilde s},\tL_1))\)\}.
\eea

Recall from \eqref{eq:kal variance} that for $\epsilon>0$, we may find $N$ such that $n\ge N$ implies
\be\label{max2}
n\E\(|\cc_n(1,2)|^{2}1_{\{|\cc_n(1,2)|\le h_n\}}\) \le \sigma^2+\epsilon.
\ee
which in turn implies
\be\label{max2q}
n\E\(|\cc_n(1,2)|^{2q}1_{\{|\cc_n(1,2)|\le h_n\}}\) \le h_{n}^{2q-2}(\sigma^2+\epsilon).
\ee
To see this, note that a distribution satisfying \eqref{max2} with maximum $2q$th moment is given by $\cc_n(1,2)= \pm h_n$ with probability $\frac{\sigma^2+\epsilon}{nh_n^2}$ and $\cc_n(1,2)= 0$
otherwise. Since $h_n\to 0$ we see that \eqref{max2q}  goes to zero for $q>1$.
Multiplying out the right side of \eqref{condition on hn}, we have that any term with a factor of $1_{\{|\cc_n(1,j_{2})|\le h_n\}}$ must have $q(1)=1$
in order to contribute to the limiting sum. It should perhaps be noted that since we must have that $q(1)=1$, for terms with a factor of $1_{\{|\cc_n(1,j_{2})|\le h_n\}}$,
the permuting/concatenating of sub-cycles which form $L$ and $\tilde L$ is not needed.

We now write
$$\cc_n(j_{k},j_{k+1})=\cc_n(j_k,j_{k+1})(1_{\{|\cc_n(j_k,j_{k+1})|\le h_n\}}+1_{\{|\cc_n(j_k,j_{k+1})|>h_n\}})$$
for all factors in all terms of \eqref{m2p} and \eqref{condition on hn}.
For fixed $j_2\equiv L_1$, we will categorize terms containing the factor $1_{\{|\cc_n(1,j_{2})|\le h_n\}}$ by the number of other factors in the term
which are of the form $$|\cc_n(1,j_{k})|^21_{\{|\cc_n(1,j_{k})|\le h_n\}}\quad\text{for any }\ k.$$  There are at most $p/2$ such factors.
In particular, consider terms of \eqref{condition on hn} which include the factor $|\cc_n(1,\tL_2)|^{2}1_{\{|\cc_n(1,\tL_{2})|\le h_n\}}$.
The above procedure on our cycle rooted at $1$  is repeated on the cycle $\tL=: \tL^{(1)}$, which is also rooted at $1$.  In other words,
 we fix the value of $\tL_2$ and consider cycles such that the edge $(1,\tL_2)$ is crossed exactly once in each direction. We remove
these $2$ edges from $\tL$ leaving us with
two sub-cycles $L^{(2)}$ and $\tL^{(2)}$ rooted at $L_1^{(2)}:=\tL_2^{(1)}$ and $\tL_1^{(2)}:=1$.
We then repeat the procedure on the cycle $\tL^{(2)}$ to get two more sub-cycles $L^{(3)}$ and $\tL^{(3)}$, and continue this process until all edges of the form $(1,\cdot)$ or $(\cdot,1)$ are ``removed''. Thus, for any term containing $1_{\{|\cc_n(1,j_{2})|\le h_n\}}$ there is a corresponding list of cycles $(L^{(1)}, L^{(2)},\ldots, L^{(M)})$.  The list is of length $M\le p/2$ where $M$ depends on the term (thus terms are categorized by their associated $M$ value), and each cycle in the list is rooted at a different vertex in $\{2,\ldots,n\}$.
Let ${\mathbb{L}_M(n)}$ denote the set of all possible lists of cycles of length $M$.

\includegraphics[scale=.33]{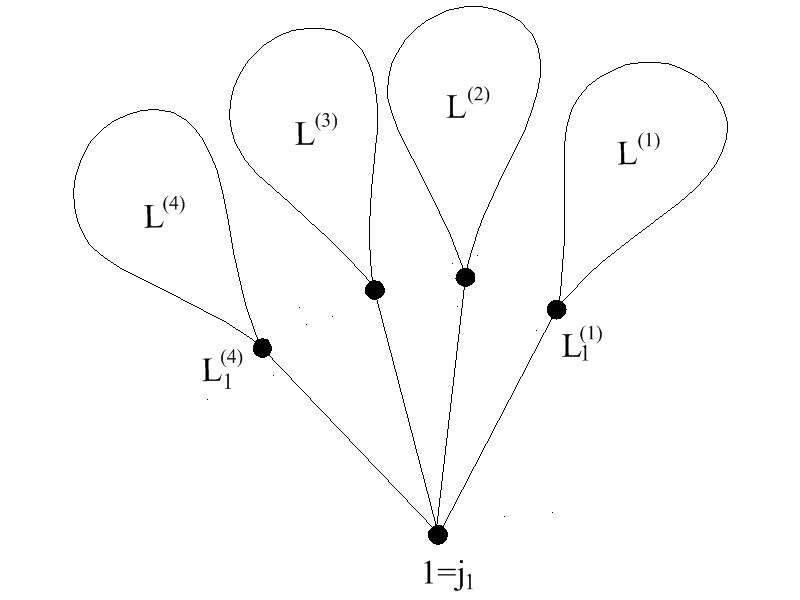}

Finally, recalling that $L_1^{(1)}\equiv j_2$, the sum of all contributing terms in \eqref{condition on hn}
can be written in the form
\bea \nn
\sum_{M=0}^{p/2} \sum_{{\mathbb{L}_M(n)}}\prod_{i=1}^M\(\E\[ |\cc_n(1,L_1^{(i)})|^{2}1_{\{|\cc_n(1,j_{2})|\le h_n\}})\]\E\[\prod_{i=1}^M\cc_n(L_1^{(i)}, L_2^{(i)})\cdots \cc_n(L_{s^{(i)}}^{(i)},L_1^{(i)})\]\).
\eea
Summing over the possible first coordinates of each cycle in the list of cycles, $L^{(i)}_1\in\{2,\ldots, n\}$, and taking the limit gives us
\bea\label{last moment eq}
 \lim_{n\to\infty}\sum_{M=0}^{p/2}\sum_{(L^{(1)},\ldots,L^{(M)})\in{\mathbb{L}_M(n)}}\sigma^{2M}\E\[\prod_{i=1}^M\cc_n(L_1^{(i)}, L_2^{(i)})\cdots \cc_n(L_{s^{(i)}}^{(i)},L_1^{(i)})\].
\eea
Let $(\cc^{\sigma,1}_n)$ be matrices which are modified using only the first step of the replacement procedure, i.e., where only a single cord to infinity (from 1) has been substituted.
Using the fact that
$$|\cc_n(j_k,j_{k+1})|=|\cc^{\sigma,1}_n(j_k,j_{k+1})| \ \text{ on the event }\ \{|\cc_n(j_k,j_{k+1})|> h_n\},$$
a relatively straightforward calculation of $M_{p}(\E\Spec_{\tilde\cc_n^\sigma})$ using \eqref{m2p} also gives \eqref{last moment eq} by
\begin{itemize}
\item[(a)] conditioning on the number of times that a given cycle rooted at $1$ crosses the cord from 1 to infinity (in either direction) to be $2M$, and
\item[(b)] for a fixed set of loops $L^{(1)},\ldots, L^{(M)}$ in \eqref{last moment eq} with different roots, one can identify their different roots with one single root. This single root should be thought of as the vertex at infinity which is connected to 1 by the cord in (a) above. One need only check that the two configurations of loops give the same value for
    the expression
    \be\label{same root}
      \lim_{n\to\infty}\E\[\prod_{i=1}^M\cc_n(L_1^{(i)}, L_2^{(i)})\cdots \cc_n(L_{s^{(i)}}^{(i)},L_1^{(i)})\].
      \ee
\end{itemize}
There is a slight subtlety regarding the invariance of \eqref{same root} under the identification of roots. The subtlety is that the dependence structure of edges crossed in \eqref{same root} is changed under the
identification of roots.
However, note that we can approximate the L\'evy measure by a sum of Dirac point measures, and without loss of generality, we will assume it has this form.  Then, it turns out that the dependence structure of edges crossed in \eqref{same root} does not affect the value of \eqref{same root} since (i) the dependence structure only changes on the event that the various edges crossed have a common weight $\lambda$, and (ii) in this event, the $2q$th moment of $\lambda$ times a Rademacher random variable is $\lambda^{2q}$.  Thus, for example, the product of the variances of two independent $\lambda$-scaled Rademachers is exactly the fourth moment of a single $\lambda$-scaled Rademacher.

The proof of the theorem is now complete for the first step of the replacement procedure.  Equivalence of moments for other steps in the replacement procedure follows similarly, and the rest of the proof is left as an exercise.




\end{proof}

\begin{remark} When $\Pi$ is trivial, all the $q_i$'s in \eqref{qi's} are {\it all} equal to $2$. This leads to the well-known fact that \eqref{m2p} is the number of Dyck words of length $2p$ which is just the $p$th Catalan number $$c_{p}=\frac{(2p)!}{p!(p+1)!}.$$
\end{remark}

We next have a result which relates the moments of the matrix entries to the moments of the L\'evy measure. Both sets of moments are also related to the moments of the LSD
using \eqref{qi's}; moreover, together with the proposition below, \eqref{qi's} proves existence of the limit in \eqref{moments}.
\begin{prop}[Triangular array moments are related to L\'evy measure moments]\label{lem:moments}
Suppose that $\{\cc(n,k), 1\le k\le n\}_{n\in\N}$ is a triangular array of random variables which are i.i.d. in each row, and for which
$\sum_{k=1}^{n}|\cc(n,k)|^2$ converges weakly as $n\to\infty$
to an infinitely divisible law with subordinator characteristics $(\sigma^2,\Pi_s)$.
If the random variables are uniformly bounded,
\be\label{a:ub}
|\cc(n,k)|\le\tau \quad\text{for all } n \text{ and } k,
\ee
 then
$$\lim_{n\to\infty}n \E |\cc(n,1)|^2=\sigma^2+M_1(\Pi_s)\, ,$$
and for $p>1$
$$\lim_{n\to\infty}n \E |\cc(n,1)|^{2p}=M_{p}(\Pi_s).$$
\end{prop}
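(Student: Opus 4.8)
The plan is to exploit the Poisson approximation for row sums of the triangular array (Kallenberg~\cite[Cor.~15.16]{kallenberg2002foundations}, cited in the excerpt) together with the uniform bound \eqref{a:ub}, which by that same approximation forces the subordinator L\'evy measure $\Pi_s$ to be supported on $[0,\tau^2]$. Write $X_n:=|\cc(n,1)|^2$, a nonnegative random variable supported on $[0,\tau^2]$, and let $\nu_n$ be its law. The weak convergence $\sum_{k=1}^n X_{n,k}\Rightarrow ID(\sigma^2,\Pi_s)$ of the row sums of the i.i.d.\ copies $X_{n,1},\dots,X_{n,n}$ is, by the classical theory of null (infinitesimal) triangular arrays, equivalent to the vague convergence on $(0,\infty)$ of the intensity measures $n\,\nu_n(dx)\to\Pi_s(dx)$ together with the variance-type condition $\lim_{\epsilon\downarrow 0}\limsup_{n\to\infty} n\int_{(0,\epsilon]} x\,\nu_n(dx) \le \sigma^2$ and the matching lower bound — precisely the statement recorded in \eqref{eq:kal variance} for the truncated second moments. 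The key point is that because everything lives on the compact set $[0,\tau^2]$, vague convergence on $(0,\infty)$ upgrades to weak convergence of $n\,\nu_n$ once we add the atom at $0$ back in the right way, and the only mass that can ``escape'' toward $0$ is exactly the Gaussian part $\sigma^2$ (it escapes in the $x$-weighted sense but not in the $x^p$-weighted sense for $p>1$, since $x^p = o(x)$ near $0$).

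Concretely, for $p\ge 1$ I would write
\[
n\,\E X_n^p \;=\; n\int_{[0,\tau^2]} x^p\,\nu_n(dx)
\;=\; n\int_{(\epsilon,\tau^2]} x^p\,\nu_n(dx) \;+\; n\int_{(0,\epsilon]} x^p\,\nu_n(dx),
\]
and analyze the two pieces. For the first piece, $x\mapsto x^p$ is bounded and continuous on the closed set $[\epsilon,\tau^2]$ (away from $0$), so vague convergence $n\,\nu_n\to\Pi_s$ on $(0,\infty)$ gives $n\int_{(\epsilon,\tau^2]} x^p\,\nu_n(dx)\to \int_{(\epsilon,\tau^2]} x^p\,\Pi_s(dx)$ for all $\epsilon$ outside the at-most-countable set of atoms of $\Pi_s$; letting $\epsilon\downarrow 0$ this converges to $M_p(\Pi_s)$ (finite since $\Pi_s$ is supported on $[0,\tau^2]$ and satisfies $\int (1\wedge x)\,\Pi_s(dx)<\infty$, hence $\int x^p\,\Pi_s(dx)<\infty$ for $p\ge 1$). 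For the second piece, bound $n\int_{(0,\epsilon]} x^p\,\nu_n(dx)\le \epsilon^{p-1}\, n\int_{(0,\epsilon]} x\,\nu_n(dx)$. When $p>1$ this is $\le \epsilon^{p-1}(\sigma^2+o(1))$ by the truncated-second-moment bound \eqref{max2} (equivalently \eqref{eq:kal variance}), which vanishes as $\epsilon\downarrow 0$ uniformly in large $n$; hence the tail piece contributes nothing and $\lim_n n\E X_n^p = M_p(\Pi_s)$, proving the $p>1$ case. When $p=1$, the second piece is exactly $n\int_{(0,\epsilon]} x\,\nu_n(dx)$, whose $\limsup$ over $n$ then $\epsilon\downarrow 0$ is $\sigma^2$ by the lower-bound half of the Gaussian-component characterization (the companion of \eqref{eq:kal variance}), and there is a matching liminf, so adding the two pieces gives $\lim_n n\E X_n = \sigma^2 + M_1(\Pi_s)$.

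The main obstacle — really the only nonroutine point — is making rigorous the separation of the ``$\sigma^2$ mass near $0$'' from the ``$\Pi_s$ mass bounded away from $0$'': i.e.\ justifying that the weak limit of the row sums is equivalent to the pair of conditions (vague convergence of $n\,\nu_n$ on $(0,\infty)$, plus the two-sided truncated-second-moment limit equal to $\sigma^2$) with no leftover cross terms. This is exactly the content of the classical Lévy–Khintchine limit theorem for triangular arrays of infinitesimal nonnegative summands (see Kallenberg~\cite{kallenberg2002foundations} or \cite{kyprianou2006introductory}); the infinitesimality (null-array) condition needed to apply it follows from the i.i.d.-in-row structure together with the uniform bound \eqref{a:ub}, since $n\,\E X_n\le n\,\tau^{2(p-1)}\E X_n^p$ stays bounded forces $\E X_n\to 0$. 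Everything else is the elementary truncation estimate above, and I would relegate those computations to a line or two, citing \eqref{eq:kal variance} for the quantitative input already established in the excerpt.
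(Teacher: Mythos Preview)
Your argument is correct and constitutes a genuinely different route from the paper's own proof. The paper works with characteristic functions: setting $X_n:=|\cc(n,1)|^2$, it invokes \cite[Lemma~5.8]{kallenberg2002foundations} to get $n(\varphi_{X_n}-1)\to i\theta\sigma^2+\int_0^{\tau^2}(e^{i\theta x}-1)\,\Pi_s(dx)$ uniformly on compacts, then expands both sides as power series in $\theta$ (legitimate because of the uniform bound \eqref{a:ub} and the compact support of $\Pi_s$) and reads off all the moment limits at once by matching coefficients. Your approach instead uses the Doeblin--Gnedenko criteria (Proposition~\ref{Kall cor15.16}) directly: vague convergence $n\nu_n\to\Pi_s$ on $(0,\infty)$ handles the piece $\int_{(\epsilon,\tau^2]}x^p\,n\nu_n(dx)$, while the drift/truncated-first-moment condition controls the piece near $0$, with the factor $\epsilon^{p-1}$ killing it for $p>1$ and leaving exactly $\sigma^2$ for $p=1$.

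The paper's method is shorter and delivers all $p$ simultaneously; yours is more elementary (no complex analysis or coefficient matching) and makes the mechanism---$\sigma^2$ as the first-moment mass escaping to the origin, invisible to higher moments---completely transparent. Two small clean-ups: the bound you need on $n\int_{(0,\epsilon]}x\,\nu_n(dx)$ for fixed $\epsilon$ is the drift condition in Proposition~\ref{Kall cor15.16} rather than \eqref{max2} (which is stated for a diagonal sequence $h_n\to 0$), and the resulting constant is $b_\epsilon$ rather than $\sigma^2$, though this is uniformly bounded in $\epsilon$ so the conclusion is unaffected. For $p=1$ you can in fact skip the $\limsup/\liminf$ dance entirely: the drift condition already gives $n\int_{(0,\epsilon]}x\,\nu_n(dx)\to b_\epsilon$ directly, and adding the two pieces yields a limit independent of $\epsilon$.
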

\begin{proof}
Set $X_n:=|\cc(n,1)|^2$ with characteristic function $\varphi_{X_n}$.
The characteristic function of $$\lim_{n\to\infty}\sum_{k=1}^n |\cc(n,1)|^2\ed X$$ in \eqref{psi2} takes the form
\be
\varphi_X(\theta)=\exp\( i\theta\sigma^2+\int_0^{\tau^2} (e^{i\theta x}-1)\, \Pi_s(dx)\)
\ee
and by convergence in distribution of the row sums {and}
 Lemma 5.8 in \cite{kallenberg2002foundations}, 
\be\nn
\lim_{n\to\infty}n(\varphi_{X_n}-1)=i\theta\sigma^2+\int_0^{\tau^2} (e^{i\theta x}-1)\, \Pi_s(dx)
\ee
uniformly in $\theta$ on compact subsets of $\R$.
Since the {$\{X_n\}$} are bounded and since $\Pi_s$ has bounded support we may expand both sides in terms of power series and switch summations with integrals.  This gives us
\be
\lim_{n\to\infty}n \sum_{k\ge 1} \frac{(i\theta)^k\E X_n^k}{k!}= i\theta\sigma^2+\sum_{k\ge 1} \int_0^{\tau^2} \frac{(i\theta x)^k}{k!}\, \Pi(dx)
\ee
uniformly on compact subsets, from which the lemma follows.
\end{proof}

To verify the ``moment problem'' required to use Proposition \ref{lemma:same}, we adapt arguments from \cite{bauer2001random, khorunzhy2004eigenvalue, zakharevich2006generalization}.
 Let $Q_{p}$ be the set of $(q_1,\ldots,q_\ell)$ such that $q_i\in\N$, $\ \sum_{i=1}^\ell q_i = p$,
and
$$q_1\ge q_2\ge\cdots\ge q_\ell.$$ Also, fix a sequence of distinct colors $\{K_i\}_{i=0}^\infty$. We define
$T((q_1,\ldots,q_p))$  to be the number of colored rooted trees which satisfy
\begin{itemize}
\item There are $p+1$ vertices.
\item There are exactly $q_i$ vertices of color $K_i$ with the root being the only vertex of color $K_0$.
\item If $u$ and $v$ are the same color then the distance from $u$ to the root is equal to the distance from $v$ to the root.
\item If  $u$ and $v$ have the same color then so do their parents.
\end{itemize}
Define $$\II_{p,\ell}:=\sum_{(q_1,\ldots,q_\ell)\in Q_p}T((q_1,\ldots,q_\ell)).$$

\begin{prop}[LSD determined by its moments] \label{prop:moment bound}
Under assumption \eqref{finite nu moments},
\be
M_{2p}(\E\Spec_{\cc_\infty})\le\tau^{2p}\sum_{\ell}\II_{p,\ell}\(M_2(\Pi)+\Pi([-1,1]^c)+\sigma^2\)^\ell,
\ee
and thus $\E\Spec_{\cc_\infty}$ exists and is determined by its moments.
\end{prop}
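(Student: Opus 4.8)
The plan is to feed the moment-method expansion of this section into Carleman's condition. By \eqref{m2p} and the analysis following it, $M_{2p}(\E\Spec_{\cc_n})$ equals, up to a vanishing error, a sum over ``tree-like'' cycles rooted at $1$: such a cycle traverses a finite rooted tree, it crosses the $i$th of its $\ell$ edges exactly $2q_i$ times (with $q_1\ge\cdots\ge q_\ell\ge1$ and $\sum_iq_i=p$), and by \eqref{qi's} the associated term equals $\prod_{i=1}^\ell\E|\cc_n(1,2)|^{2q_i}$. First I would group these cycles by shape: following \cite{bauer2001random, khorunzhy2004eigenvalue, zakharevich2006generalization}, color each vertex by the edge-class through which it is reached, which forces same-colored vertices to be equidistant from the root and to have same-colored parents, so the number of resulting colored rooted trees with $q_i$ vertices of color $K_i$ is exactly $T((q_1,\ldots,q_\ell))$; there are at most $n(n-1)\cdots(n-\ell+1)\le n^\ell$ ways to assign distinct labels in $\{2,\ldots,n\}$ to the $\ell$ non-root vertices. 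Since $n^\ell\prod_i\E|\cc_n(1,2)|^{2q_i}=\prod_i\bigl(n\,\E|\cc_n(1,2)|^{2q_i}\bigr)$, letting $n\to\infty$ and using Proposition \ref{lem:moments} to evaluate each factor gives
\begin{equation*}
M_{2p}(\E\Spec_{\cc_\infty})\ \le\ \sum_{\ell\ge1}\ \sum_{(q_1,\ldots,q_\ell)\in Q_p}T\bigl((q_1,\ldots,q_\ell)\bigr)\,\prod_{i=1}^\ell\Bigl(\lim_{n\to\infty}n\,\E|\cc_n(1,2)|^{2q_i}\Bigr).
\end{equation*}

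Next I would bound each factor by $\tau^{2q}C$ where $C:=M_2(\Pi)+\Pi([-1,1]^c)+\sigma^2$. We may assume $\tau\ge1$, since replacing $\tau$ by $\max(\tau,1)$ only weakens \eqref{finite nu moments}. By Proposition \ref{lem:moments}, $\lim_n n\,\E|\cc_n(1,2)|^2=\sigma^2+M_1(\Pi_s)=\sigma^2+M_2(\Pi)\le\tau^2C$ for $q=1$; and for $q>1$, using that $\Pi_s$ is supported in $[0,\tau^2]$ with $M_1(\Pi_s)=M_2(\Pi)$ and $\Pi_s\{x:x>1\}=\Pi([-1,1]^c)$, splitting the integral at $1$ yields $\lim_n n\,\E|\cc_n(1,2)|^{2q}=M_q(\Pi_s)\le M_1(\Pi_s)+\tau^{2q}\Pi([-1,1]^c)\le\tau^{2q}C$. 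Since $\sum_iq_i=p$, the product over $i$ in the display above is at most $\tau^{2p}C^\ell$; summing over the partitions in $Q_p$ and recalling $\II_{p,\ell}=\sum_{(q_1,\ldots,q_\ell)\in Q_p}T((q_1,\ldots,q_\ell))$ gives the asserted bound $M_{2p}(\E\Spec_{\cc_\infty})\le\tau^{2p}\sum_\ell\II_{p,\ell}C^\ell$.

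Finally I would deduce moment determinacy. A crude count suffices: there are at most $4^p$ rooted plane-tree shapes with $p$ edges, at most $p^p$ colorings of its (at most $p$) edges, and at most $2^p$ partitions of $p$, so $\sum_\ell\II_{p,\ell}\le(Kp)^p$ for an absolute constant $K$, and together with $C^\ell\le\max(1,C)^p$ this gives $M_{2p}(\E\Spec_{\cc_\infty})\le(K'p)^{2p}$ for some $K'=K'(\tau,\sigma^2,\Pi)$. Hence $\sum_{p\ge1}M_{2p}(\E\Spec_{\cc_\infty})^{-1/(2p)}\ge\sum_p(K'p)^{-1}=\infty$, so Carleman's condition holds; since the odd moments vanish (non-crossing pair partitions force $p$ even) and the even moment limits exist and are finite by the above and by Proposition \ref{lem:moments}, and since $(M_p(\E\Spec_{\cc_n}))_n$ is a convergent sequence of genuine moment sequences, the limit sequence is a positive-semidefinite Hankel sequence represented by a unique probability measure — which is, by definition, $\E\Spec_{\cc_\infty}$ — and that measure is determined by its moments. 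I expect the main obstacle to be the combinatorial bookkeeping in the first step: correctly identifying which edge-classes of a contributing cycle get merged (i.e.\ which vertices share a color), so that the number of shapes is controlled by $T((q_1,\ldots,q_\ell))$ and the labelings contribute exactly the factor $n^\ell$ in the limit; this is the argument to be adapted from \cite{bauer2001random, khorunzhy2004eigenvalue, zakharevich2006generalization}, while Steps 2 and 3 are routine by comparison.
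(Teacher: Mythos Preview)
Your argument is correct and follows essentially the same route as the paper for the main inequality: both you and the paper invoke the combinatorics of \cite{bauer2001random, khorunzhy2004eigenvalue, zakharevich2006generalization} to express $M_{2p}(\E\Spec_{\cc_\infty})$ as a sum over colored trees weighted by $\prod_i\lim_n n\E|\cc_n(1,2)|^{2q_i}$, then use Proposition~\ref{lem:moments} and the splitting of $\Pi_s$ at $1$ (together with $\tau\ge1$) to bound each factor by $\tau^{2q_i}C$, arriving at the displayed bound.

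The only genuine difference is in the moment-determinacy step. The paper imports the sharper combinatorial estimate $\II_{p,\ell}\le c_p\,\SS_{p,\ell}$ from \cite[Eq.~(9)]{bauer2001random} (with $\SS_{p,\ell}$ a Stirling number of the second kind) and then shows $\frac{c_p}{(2p)!}\sum_\ell R^\ell\SS_{p,\ell}=o(r^p)$, which is the exponential-moment criterion of \cite[Thm.~30.1]{billingsley1986probability}. You instead use the crude self-contained count $\sum_\ell\II_{p,\ell}\le(Kp)^p$ and feed the resulting bound $M_{2p}\le(K'p)^{2p}$ directly into Carleman's condition. Your route avoids the external citation at the cost of a weaker growth estimate; the paper's route gives more (the limit has all exponential moments) but relies on a nontrivial identity from \cite{bauer2001random}. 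Both are valid. One small redundancy: your factor ``at most $2^p$ partitions of $p$'' is unnecessary, since a coloring of the $p$ edges already determines its partition type---but it does no harm to the bound.
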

\begin{proof}
By splitting the support of $\Pi$ into $[-1,1]$ and its complement, note that $M_{2q}(\Pi)\le M_2(\Pi)+\tau^{2q}\Pi([-1,1]^c)$.
 Also, without loss of generality, $\tau\ge 1$. We use {Proposition} \ref{lem:moments} in conjunction with the argument of \cite[Thm. 2]{zakharevich2006generalization} (see also \cite[Sec. 5.3]{bauer2001random} and \cite[Sec. IV]{khorunzhy2004eigenvalue})
to get
\bea\label{zb}
&&M_{2p}(\E\Spec_{\cc_\infty})=\nn\\
&&\nn\lim_{n\to\infty} \sum_{(q_1,\ldots,q_\ell)\in Q_p}T((q_1,\ldots,q_\ell))n\E(|\cc_n(1,2)|^{2q_1})\cdots n\E(|\cc_n(1,2)|^{2q_\ell})\\
&&\le \nn\sum_{(q_1,\ldots,q_\ell)\in Q_p}T((q_1,\ldots,q_\ell)) (M_{2q_1}(\Pi)+\sigma^{2}) \cdots (M_{2q_\ell}(\Pi)+\sigma^2)\\
&&\le \nn \tau^{2p}\sum_{(q_1,\ldots,q_\ell)\in Q_p}T((q_1,\ldots,q_\ell)) \(M_2(\Pi)+\Pi([-1,1]^c)+\sigma^{2}\)^\ell\\
&&=\tau^{2p}\sum_{\ell}\II_{p,\ell}\(M_2(\Pi)+\Pi([-1,1]^c)+\sigma^2\)^\ell.
\eea

Next, we use Eq. (9) in \cite{bauer2001random} 
which gives the bound
\be\label{bgb}
\II_{p,\ell}\le c_{p}\SS_{p,\ell}
\ee
(see also Prop. 10 in \cite{zakharevich2006generalization}) where $c_{p}$ is the $p$th Catalan number and
$$\SS_{p,\ell}=\frac 1 {\ell!}\sum_{k=0}^\ell(-1)^{\ell-k}	\dbinom{\ell}{k}k^{2p}$$
is a Stirling number of the second kind. By \eqref{zb}, \eqref{bgb}, and Theorem 30.1 in \cite{billingsley1986probability},
$\E\Spec_{\cc_\infty}$ is determined by its moments if for any $R>0$,
\be\label{eq:moment bound}
\frac{c_{p}}{(2p)!}\sum_{\ell=1}^p R^\ell\SS_{p,\ell}
\ee
 is $o(r^{p})$ for some $r$ as $p\to\infty$, and this is easily verified. For example Section 5.5 of \cite{bauer2001random} shows \eqref{eq:moment bound} is
less than $(p^p+e^{R(p-1)})/(p!(p+1)!)$.
\end{proof}

{
\begin{remark}
In \cite{bauer2001random}, the lower bound $\SS_{2p,\ell}\le \II_{2p,\ell}$ was also established and used to show that the LSD has unbounded support (see also \cite[Prop. 12]{zakharevich2006generalization}).
In our situation, this tells us that the \LK\ ensembles for which the LSD has bounded support are precisely those with only a Wigner portion, i.e., those with
characteristics of the form $(\sigma^2,0,0)$.
\end{remark}}

\section{From local weak convergence to spectral convergence}\label{sec:nowigner}

In this section, to simplify things we restrict our attention to
random conductance matrices $\cc_n$ with {\it real} entries.
The goal of this
section is to present Theorem \ref{thm:lwc to spectrum} which uses
strong resolvent convergence to connect the notions of local weak
convergence and weak convergence of ESDs. Theorem \ref{thm:lwc
to spectrum} below is similar to \cite[Theorem 2.2]{bordenave2011spectrum}  (see also
\cite{bordenave2010resolvent, bordenave2011nonHermitian, bordenave2012around}), and its proof
is an adaptation of the arguments there
which treat the symmetric $\alpha$-stable case:
$$(\sigma^2,0,\Pi)=(0,0,\text{sign}(x)\alpha|x|^{-1-\alpha}dx).$$ Here
we replace the $\alpha$-stable L\'evy measure with an arbitrary
symmetric L\'evy measure $\Pi(dx)$ on $\Roo$. In particular, if one assumes self-adjointness of the
limiting operator (which follows from Lemma \ref{SelfAdjointness} below),
then the arguments in this section are enough
to handle Theorem \ref{thm:main2} in the case when $\sigma=0$ and the entries are real.

Let us now present the precise notion of local weak convergence
following the treatment in \cite{aldous2004objective}. Let
$G[\emp]=(V,E)$ be a $\emp$-rooted graph with vertex set $V$ and
edge set $E$ both of which are at most countably infinite. Any
edge-weight
function $\r:E\to\Roo$
 defines
a distance  between any two vertices $u,v\in V$ as \be
\label{distance} d(u,v):=\inf_{\gamma\text{ connects }u,v}
\sum_{e\in \gamma} |\r(e)| \ee where the infimum is over all paths $\gamma$
which connect vertices $u$ and $v$. The distance $d$ naturally turns
$G[\emp]$  into a metric space. We include $\pm\infty$ as a possible
edge-weight where $\pm\infty$ is thought of as the same weight
using the one-point compactification of $\Roo$.

If $G[\emp]$ is connected and undirected and the edge-weight
function $\r$ is such that for every vertex $v$ and every
${r}<\infty$, the number of vertices within distance ${r}$ of $v$ is
finite, then $G[\emp]=(V,E,\r)$ is a {\bf rooted geometric graph}.
Henceforth all graphs will be rooted geometric graphs, and when they
are rooted at the default root $\emp$, we may simply write $G$
instead of $G[\emp]$. The set of all rooted geometric graphs is
written $\GG_\star$.

In the case that the range of $\r$ is  positive and the underlying graph is a tree, we can interpret
$\r$ as assigning resistances to edges.  However, for technical reasons
required by the proofs of our main results, {\it we allow $\r$ to take negative values}.
The possibility of negative weights makes our treatment here differ slightly from
\cite{aldous2004objective}.  But, using the
modulus  in \eqref{distance} nevertheless permits us to reap the
benefits of the metric of \cite{aldous2004objective} on $\GG_\star$.


Let $\NN_{{r},\emp}(G)$ be the ${r}$-neighborhood of $\emp$. This is
the $\emp$-rooted subgraph of $G$ formed by restricting the graph to
the set of all vertices $v\in V$ such that $d(\emp,v)\le {r}$ and
restricting to the set of edges that can be crossed by journeying at
most distance $r$ from the root $\emp$. We say $r$ is a {\it
continuity point} of $G$ if there is no vertex of exact distance $r$
from the root.

\begin{defi}[The topology of $\GG_\star$] We say $\left(G_{n}=(V_n,E_n,\r_n)\right)_{n\in\N}$ converges to $G=(V,E,\r)$ in $\GG_\star$
if for each continuity point $r$ of $G$, there is an $n_r$ such that
$n>n_r$ implies there exists a graph isomorphism $$\pi_n:\NN_{{r},\emp}(G)\to \NN_{{r},\emp}(G_n)$$ which preserves the
root and for which
\be\label{edge limit}
  \lim_{n\to\infty} \r_{n}(\pi_{n}^{-1}(u),\pi_{n}^{-1}(v)) = \r(u,v).
  \ee
\end{defi}

As noted in \cite{aldous2004objective}, the above convergence
determines a topology which turns $\GG_\star$ into a complete
separable metric space. Using the usual theory of convergence in
distribution, one can therefore say that a sequence of random rooted geometric graphs
$\(G_n\)_{n\in\N}\subset\GG_\star$, with distributions $\mu_n$, converge weakly to
$G\in\GG_\star$ with distribution $\mu$ if for all bounded
continuous $f:\GG_\star\to\R$ \be \int_{\GG_\star} f d\mu_n \to
\int_{\GG_\star} f d\mu. \ee Such weak convergence is called {\it
local weak convergence}.

The following connection between local weak convergence and strong resolvent convergence was first noticed in \cite{bordenave2010resolvent} and \cite{bordenave2011spectrum}
in the context of sparse matrices and heavy-tailed matrices, respectively {(see \cite{hora2007quantum} for related arguments).}

\begin{thm}[Local weak convergence implies strong resolvent convergence]\label{thm:lwc to spectrum}
Let $\(\cc_{G_n}\)_{n\in\N}$, which are associated to $\(G_{n}=(V_n,E_n,\r_n)\)_{n\in\N}$ as in \eqref{aPWIST}, be essentially self-adjoint.
Suppose that the graphs converge in the local weak sense to a tree $G=(V,E,\r)$
with respect to the isomorphisms {$(\pi_n)_{n\in\N}$}, and that $\cc_G$ is also essentially self-adjoint.

If for each $u\in V$,
\bea\label{eq:ui}
\lim_{\eps\searrow 0}\lim_{n\to\infty}
\sum_{v\in V_n:v\sim \pi_n(u)}|\cc_{G_n}(\pi_n(u),v)|^21_{\{|\cc_{G_n}(\pi_n(u),v)|^2\le\eps\}}=0 \ \text{a.s.}, \eea
 then for all
$z\in\C_+$, as $n\to\infty$: \be\label{part1}  \la e_\emp,(\cc_{G_n}-zI)^{-1}e_\emp\ra
\weak  \la e_\emp,(\cc_{G}-zI)^{-1}e_\emp\ra .\ee
\end{thm}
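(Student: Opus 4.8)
The plan is to mirror the strategy of \cite{bordenave2011spectrum} and \cite{bordenave2010resolvent}, reducing the convergence of the resolvent matrix element to local weak convergence by passing through a finite-rank truncation. First I would fix $z\in\C_+$ and, for a cutoff radius $r$ which is a continuity point of $G$, let $\cc_{G_n}^{(r)}$ and $\cc_G^{(r)}$ denote the operators obtained by restricting the respective graphs to the $r$-neighborhood $\NN_{r,\emp}$ of the root (keeping only vertices and edges reachable within distance $r$). These are finite symmetric matrices; on the finite set $\NN_{r,\emp}(G)$ the isomorphisms $\pi_n$ together with \eqref{edge limit} give $\cc_{G_n}^{(r)}\to\cc_G^{(r)}$ entrywise, hence $\la e_\emp,(\cc_{G_n}^{(r)}-zI)^{-1}e_\emp\ra\to\la e_\emp,(\cc_G^{(r)}-zI)^{-1}e_\emp\ra$ by continuity of matrix inversion (the imaginary part of $z$ keeps everything invertible with norm bounded by $1/\Im z$). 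So the truncated statement is essentially free; the work is in controlling the truncation error uniformly in $n$.

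The key analytic tool is the resolvent (second) identity: for self-adjoint $A,B$ with $B$ a restriction/perturbation of $A$,
\be\nn
(A-zI)^{-1}-(B-zI)^{-1}=(A-zI)^{-1}(B-A)(B-zI)^{-1},
\ee
which lets me bound $|\la e_\emp,(\cc_{G_n}-zI)^{-1}e_\emp\ra-\la e_\emp,(\cc_{G_n}^{(r)}-zI)^{-1}e_\emp\ra|$ by terms involving the ``boundary'' edges cut by the truncation, i.e., edges from vertices at distance just under $r$ to vertices beyond $r$. Here I would use essential self-adjointness of $\cc_{G_n}$ and $\cc_G$ (assumed, and in our setting guaranteed by Lemma \ref{SelfAdjointness}) to justify working with the closures, and I would exploit the tree structure of the limit $G$: on a tree, the root-to-root resolvent entry localizes, because removing an edge at distance $d$ from the root only affects $R_{\emp\emp}$ through a chain of $d$ resolvent factors, each contributing a bounded factor, while the relevant conductances across the boundary are small. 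Condition \eqref{eq:ui} is exactly what is needed to make the contribution of the small-conductance edges near the boundary vanish in the iterated limit $\lim_{\eps\searrow0}\lim_{n\to\infty}$; the large-conductance edges are finite in number (by the rooted-geometric-graph/local-finiteness property) and are captured correctly by $\pi_n$ once $r$ is a continuity point.

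Concretely, the steps in order: (1) reduce to $z=i\eta$ or general $z\in\C_+$ via the uniform bound $\|(\,\cdot\,-zI)^{-1}\|\le 1/\Im z$; (2) for each continuity point $r$, establish entrywise convergence of the truncated operators from the graph isomorphisms and \eqref{edge limit}, giving convergence of the truncated resolvent entries; (3) apply the resolvent identity to express the truncation error at level $r$ in terms of boundary edges, and split those edges by whether their squared conductance exceeds $\eps$; (4) bound the large-conductance part using that on the tree limit the number of such edges within distance $r$ is finite and the contribution to $R_{\emp\emp}$ decays as $r\to\infty$ (geometric-type decay in $\C_+$), transferring this to large $n$ via the isomorphism; (5) bound the small-conductance part using \eqref{eq:ui}, which kills it in the limit $n\to\infty$ then $\eps\searrow0$; (6) assemble an $\eps$–$r$–$n$ argument: send $n\to\infty$ with $r,\eps$ fixed, then $\eps\searrow0$, then $r\to\infty$, to conclude \eqref{part1}. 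I expect the main obstacle to be step (4)/(5) — making the boundary error estimate genuinely uniform in $n$. One cannot directly control the $n$-finite graphs' geometry beyond radius $r$, so the estimate must be pushed entirely onto the limit tree via $\pi_n$ and the local weak convergence, and one must be careful that edges with large conductance (short length) but which the truncation at radius $r$ nonetheless cuts are handled — this is precisely why $r$ must be taken to be a continuity point and why the hypothesis is phrased with the double limit in $\eps$ and $n$. A secondary technical point is justifying all manipulations for the unbounded closures rather than just the cores $\core$, which is where essential self-adjointness of both $\cc_{G_n}$ and $\cc_G$ enters.
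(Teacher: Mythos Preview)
Your approach via finite-radius truncation and the resolvent identity is genuinely different from the paper's, and the difficulty you flag in step (4)/(5) is real and not resolved by your outline. The resolvent identity bounds $|R_n-R_n^{(r)}|$ by a term involving $(\cc_{G_n}-\cc_{G_n}^{(r)})\psi_n^{(r)}$ with $\psi_n^{(r)}=(\cc_{G_n}^{(r)}-zI)^{-1}e_\emp$, and to make this small you would need decay of $|\psi_n^{(r)}(u)|$ at boundary vertices $u$, uniformly in $n$. But local weak convergence gives no control over $G_n$ outside the $r$-ball; the claimed ``geometric-type decay in $\C_+$'' is not automatic for the finite complete graphs $G_n$, and it cannot simply be ``transferred'' from the limit tree via $\pi_n$, since $\pi_n$ only identifies the $r$-balls. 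Without such a uniform estimate the triple-limit scheme in step (6) does not close.

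The paper sidesteps this entirely with a much shorter argument (which is in fact the argument of \cite{bordenave2010resolvent,bordenave2011spectrum} that you cite). After Skorokhod representation upgrades local weak convergence to almost sure convergence, the bijections $\pi_n$ are extended so that all operators act on the common space $L^2(\N^F)$ with common core $\core$ of finitely supported vectors. One then invokes Lemma~\ref{reedlemma} (Theorem~VIII.25 of \cite{reed1980methods}): strong resolvent convergence follows once $\cc_{G_n}e_v\to\cc_G e_v$ in $L^2$ for each basis vector $e_v$. That is precisely pointwise convergence of the entries $\la e_u,\cc_{G_n}e_v\ra\to\la e_u,\cc_G e_v\ra$ (given by \eqref{edge limit}) together with uniform square-integrability of these entries in $u$ (which is exactly \eqref{eq:ui}), so the Vitali convergence theorem finishes the proof in one line. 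No radius truncation, no boundary estimate, and no iterated $r,\eps,n$ limit is needed; condition \eqref{eq:ui} enters not as a boundary-error bound but as the uniform integrability hypothesis in Vitali.
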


\begin{remark} By Proposition \ref{Kall cor15.16},
condition \eqref{eq:ui}
simply says that $\sigma^2=0$ in \eqref{LKcondition2}.
\end{remark}

Once one checks the local weak convergence of $\(G_n[1]\)_{n\in\N}$
to a PWIT$(\Pii)$ and verifies self-adjointness, then the above result essentially handles the case
where the Wigner component vanishes. Let us briefly outline this.
First of all $\sigma=0$ will imply condition \eqref{eq:ui}. Next,
recall that the \CS\ transform (or simply Stieltjes transform) is defined as \be
\label{def:Stieltjes} S_\mu(z):= \langle \mu, (x-z)^{-1} \rangle =
\int_{\R} \frac{\mu(dx)}{x-z}, \ \ \ z\in\C\backslash\R.\ee
Recall from
\eqref{esd} that
 $\Spec_{\cc_n}$ is the ESD of $\cc_{n}$.
Using the fact that entries in $\cc_n$ are i.i.d., \be
\label{eq:resolvent identity} S_{\E\Spec_{\cc_n}}(z) = \E
S_{\Spec_{\cc_n}}(z)= \frac 1 {n} \E \tr (\cc_{n}-zI)^{-1}=\E
(\cc_{n}-zI)^{-1}(1,1). \ee Therefore, by \eqref{eq:resolvent
identity}, the above theorem, and a bound on the modulus of the Green's function
$$|(\cc_{n}-zI)^{-1}(1,1)|\le (\Im z)^{-1}$$
for $z\in\C\backslash\R$, we obtain convergence of {$(S_{\E\mu_{\cc_n}})_{n\in\N}$} to $S_{\E \mu_{\cPWIST}}$ where $G_\infty$ is a
PWIT($\Pii$). Lemma \ref{lem:2.4.4}, which tells us that the
\CS\ transform determines the LSD, then implies weak convergence of
the expected ESDs (since $e_\emp$ has unit norm, the limit is a probability measure). A concentration of
measure argument from \cite{guntuboyina2009concentration}, Lemma
\ref{lem:com} below, extends this to a.s. weak convergence for the
random ESDs.

For the proof of Theorem \ref{thm:lwc to spectrum} we need a lemma
which appears as Thm VIII.25 in \cite{reed1980methods}. We state it without proof.
\begin{lem}[Strong resolvent convergence characterization]\label{reedlemma}
Suppose $\cc_{n}$ and $\cc_\infty$ are self-adjoint operators on
$L^2(V)$ with a common core $\DD$ (for all $n$ and $\infty$). If
$$\cc_{n}\varphi\to \cc_\infty\varphi \quad\text{in } \ L^2(V),$$ for
each $\varphi\in \DD$, then  $\cc_n$ converges to $\cc_\infty$ in the
strong resolvent sense.
\end{lem}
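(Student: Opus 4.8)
The plan is to verify the definition of strong resolvent convergence head-on: for every fixed $z\in\C\setminus\R$ I will show that $(\cc_n-zI)^{-1}\psi\to(\cc_\infty-zI)^{-1}\psi$ in $L^2(V)$ for each $\psi\in L^2(V)$, which is the definition of strong resolvent convergence. The single structural fact used throughout is the uniform resolvent bound $\|(\cc_n-zI)^{-1}\|\le 1/|\Im z|$, valid for every $n$ and for the limit because each operator is self-adjoint and hence has spectrum in $\R$; it is this bound that makes the whole argument uniform in $n$.

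First I would get convergence on a dense subspace. Fix $\varphi\in\DD$, note $\DD\subset\mathrm{Dom}(\cc_n)\cap\mathrm{Dom}(\cc_\infty)$, and set $\psi:=(\cc_\infty-zI)\varphi$. Then the telescoping identity
$$(\cc_n-zI)^{-1}\psi-(\cc_\infty-zI)^{-1}\psi=(\cc_n-zI)^{-1}\big(\cc_\infty\varphi-\cc_n\varphi\big)$$
holds, and combining the uniform bound with the hypothesis $\cc_n\varphi\to\cc_\infty\varphi$ shows the right-hand side tends to $0$. Hence $(\cc_n-zI)^{-1}\psi\to(\cc_\infty-zI)^{-1}\psi$ for every $\psi$ in the subspace $(\cc_\infty-zI)\DD$.

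Next I would check that $(\cc_\infty-zI)\DD$ is dense in $L^2(V)$. Since $z$ is not real and $\sigma(\cc_\infty)\subset\R$, the operator $\cc_\infty-zI$ maps $\mathrm{Dom}(\cc_\infty)$ bijectively onto $L^2(V)$; and since $\DD$ is a core for $\cc_\infty$, any $\varphi\in\mathrm{Dom}(\cc_\infty)$ admits $\varphi_k\in\DD$ with $\varphi_k\to\varphi$ and $\cc_\infty\varphi_k\to\cc_\infty\varphi$, so $(\cc_\infty-zI)\varphi_k\to(\cc_\infty-zI)\varphi$. Therefore $(\cc_\infty-zI)\DD$ is dense in $\mathrm{Ran}(\cc_\infty-zI)=L^2(V)$. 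A routine $\eps/3$ estimate then upgrades the convergence from this dense set to all of $L^2(V)$: given $\psi\in L^2(V)$ and $\delta>0$ choose $\psi'\in(\cc_\infty-zI)\DD$ with $\|\psi-\psi'\|<\delta$, use the uniform bound to write
$$\big\|(\cc_n-zI)^{-1}\psi-(\cc_\infty-zI)^{-1}\psi\big\|\le\frac{2\delta}{|\Im z|}+\big\|(\cc_n-zI)^{-1}\psi'-(\cc_\infty-zI)^{-1}\psi'\big\|,$$
and send $n\to\infty$ and then $\delta\downarrow 0$. Letting $z$ range over $\C\setminus\R$ completes the proof.

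There is no real obstacle here, as this is the classical Reed--Simon criterion; the only points meriting care are that self-adjointness of each $\cc_n$ is what supplies both the uniform resolvent bound and the surjectivity of $\cc_\infty-zI$ used in the density step, and that the core hypothesis is exactly what is needed to produce the approximating $\varphi_k$ --- without it one would be stuck with convergence on $(\cc_\infty-zI)\DD$, whose closure could be a proper subspace. One may alternatively run the argument only at $z=i$ and $z=-i$ and then quote the standard equivalence that strong convergence of the resolvents at those two points already implies strong resolvent convergence.
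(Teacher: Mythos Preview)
Your proof is correct and is essentially the standard argument from Reed--Simon, Theorem VIII.25. The paper itself does not supply a proof at all; it simply states the lemma and cites Reed--Simon, so there is nothing to compare beyond noting that your argument is the one the citation points to.
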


\begin{proof}[Proof of Theorem \ref{thm:lwc to spectrum}]
To match the setting for which we employ this theorem, let the vertex set of $G_n$ be a subset of $\N$ and the vertex set of $G$ be $\N^F$.
By assumption, the local weak limit of $\(G_n\)_{n\in\N}$ is the tree
$G$, with respect to the mappings \be\label{pi2}
\pi_n:\N^F\to V_n\subset\N\ee which are
injective when restricted to some random subset of $\N^F$ with
the same cardinality as $V_n$. By the Skorokhod representation theorem we will in
fact assume that this weak convergence in $\GG_\star$ is almost sure
convergence on some probability space. Note that when the sequence $\(\cc_{G_n}\)_{n\in\N}$ is a sequence of $n\times n$ \LK\ matrices, one may
set $V_n=\{1,\ldots,n\}$, however in general $V_n$ may even be infinite (in which case it is just $\N$).

Since $\N^F$ is countable we can fix some bijection with $\N$ and
think of $V_n$ as a subset of $\N^F$. In this case, the
maps $\pi_n$ can each be extended to (random) bijections from $\N^F$
to $\N$, and abusing notation we write $\pi_n$ for these extensions.
The essentially self-adjoint operators $\cc_{G_n}$ extend to self-adjoint operators on
$L^2(\N^F)$, using the core $\core$ consisting of vectors with
finite support, by defining \be \label{eq:sa} \la e_{u},
\cc_{G_n}e_{v} \ra:=
\begin{cases}
& \cc_{G_n}(\pi_n(u),\pi_n(v))\quad \text{if } \{\pi(u), \pi(v)\}\subset V_n\\
& 0 \quad\quad \quad \quad\quad\quad\quad\quad\  \text{otherwise}.
\end{cases}
\ee
By assumption, the closure
of $\cc_{G}$ is also self-adjoint using the core $\core$.
Again abusing notation, we identify this closure with $\cc_{G}$.

By local weak convergence and Skorokhod representation, we have that
almost surely \be\label{as conv} \la {e}_u, \cc_{G_n}{e}_{v}\ra \to
\la{e}_u, \cc_{G}\, {e}_v\ra. \ee By Lemma \ref{reedlemma},
we are left to show that \be\nn
\sum_{u\in\N^F}|\la{e}_u,\cc_{G_n}{e}_{v}\ra-\la{e}_u,
\cc_{G}\, {e}_v\ra|^2\to 0 \ee almost surely, as
$n\to\infty$. This follows from the Vitali convergence theorem since
\eqref{as conv} provides almost sure convergence and
 \eqref{eq:ui} provides uniform square integrability.
\end{proof}

A common tool for showing local weak convergence is the following
lemma about Poisson random measures which is similar to \cite[Lemma
4.1]{steele2002minimal}.


\begin{lem}[Convergence to a Poisson random measure]\label{lem:ppp}
Suppose $\{\cc(n,k), 1\le k\le n\}_{n\in\N}$ is a triangular array of real random variables which are i.i.d. in each row, and for which
$\sum_{k=1}^{n}\cc(n,k)$ converges in law, as $n\to\infty$,
to an $ID(\sigma^2,b,\Pi)$ random variable.  Then as $n\to\infty$
$$
\sum_{k=1}^n \delta_{\cc(n,k)}
$$
converge vaguely, as measures on $\Roo$, to a Poisson random measure
$\eta$ with intensity $\E\eta=\Pi$.
\end{lem}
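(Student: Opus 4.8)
The plan is to prove this via convergence of Laplace functionals, which is the standard route to establishing vague convergence of point processes to a Poisson random measure. Recall that a sequence of point processes $N_n$ on $\Roo$ converges vaguely in distribution to a Poisson random measure $\eta$ with intensity $\Pi$ if and only if, for every nonnegative continuous function $g$ with compact support in $\Roo$ (so $g$ vanishes in a neighborhood of $0$ and outside a bounded set),
\be\nn
\E\exp\(-\sum_{k=1}^n g(\cc(n,k))\) \longrightarrow \exp\(-\int_{\Roo}(1-e^{-g(x)})\,\Pi(dx)\).
\ee
Because $g$ is supported away from $0$, the function $x\mapsto 1-e^{-g(x)}$ is bounded and continuous and vanishes in a neighborhood of the origin, so the right-hand integral is finite (in fact the integrand is dominated by a multiple of $1\wedge|x|^2$ on any set bounded away from $0$, and compact support handles the tail).

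First I would use the i.i.d.-in-row structure: since the $\{\cc(n,k)\}_{k=1}^n$ are i.i.d.,
\be\nn
\E\exp\(-\sum_{k=1}^n g(\cc(n,k))\) = \(\E e^{-g(\cc(n,1))}\)^n = \(1 - \E\(1-e^{-g(\cc(n,1))}\)\)^n.
\ee
So it suffices to show that $n\,\E\(1-e^{-g(\cc(n,1))}\) \to \int_{\Roo}(1-e^{-g(x)})\,\Pi(dx)$, after which the elementary limit $(1-a_n/n)^n\to e^{-a}$ (valid when $na_n\to a$) finishes the argument. The convergence of $n\,\E\(1-e^{-g(\cc(n,1))}\)$ is exactly the kind of statement supplied by the L\'evy--Khintchine / triangular-array machinery already invoked in the paper: by the same reasoning behind Proposition \ref{Kall cor15.16} (Kallenberg, Cor.~15.16) and the convergence of the row sums $\sum_k \cc(n,k)$ to $ID(\sigma^2,b,\Pi)$, the row measures $n\,\P(\cc(n,1)\in\cdot)$ converge vaguely on $\Roo$ to $\Pi$; integrating the bounded continuous compactly-supported-away-from-$0$ function $1-e^{-g}$ against this vague convergence gives the claim. (Alternatively one can cite Lemma~5.8 of \cite{kallenberg2002foundations}, used earlier in the proof of Proposition \ref{lem:moments}, together with an approximation of $1-e^{-g}$ by linear combinations of the exponentials $e^{i\theta x}-1$; but the vague convergence route is cleaner.) The identification of the intensity as $\E\eta=\Pi$ is then immediate from the form of the limiting Laplace functional.

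The main obstacle — and it is more bookkeeping than genuine difficulty — is handling the behavior near $x=0$: the row sums $\sum_k\cc(n,k)$ involve the Gaussian piece $\sigma^2$ and the compensation $\frac{i\theta x}{1+x^2}$ in \eqref{psi}, so one must be careful that the small-jump contributions (which accumulate to the Brownian part) do not leak into the point-process limit. This is precisely why the test functions $g$ are taken with support bounded away from $0$: on such sets $\sigma^2$ plays no role, and the restriction of $n\,\P(\cc(n,1)\in\cdot)$ to $\{|x|\ge\delta\}$ converges weakly to $\Pi|_{\{|x|\ge\delta\}}$ with no boundary atoms to worry about (one may choose $\delta$ to be a continuity radius of $\Pi$, or simply note that $g$'s support can be taken inside $\{\delta\le|x|\le R\}$ with $\delta,R$ continuity points). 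Once that is pinned down, the i.i.d. structure collapses everything to the one-dimensional limit $n\,\E(1-e^{-g(\cc(n,1))})\to\int(1-e^{-g})\,d\Pi$, and the Poisson limit theorem for Laplace functionals does the rest.
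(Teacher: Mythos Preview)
Your proposal is correct and follows essentially the same route as the paper: both reduce the Poisson limit to the vague convergence $n\,\P(\cc(n,1)\in\cdot)\vague\Pi$ on $\Roo$, which is supplied by the Doeblin--Gnedenko criteria (Proposition~\ref{Kall cor15.16}). The only difference is cosmetic---the paper invokes the ``basic convergence theorem'' (Resnick, Thm.~5.3) as a black box, while you unpack that step via Laplace functionals; the underlying argument is the same.
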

\begin{proof}[Proof of Lemma \ref{lem:ppp}]
Note that any L\'evy measure $\Pi$ is also a Radon measure on $\Roo$.
Even though there is a possible singularity at $0$, this is no
concern since $0\notin \Roo$. Therefore, by the basic convergence
theorem of empirical measures to Poisson random measures  (see
Theorem 5.3 in \cite{resnick2007heavy}) we need only check that
\be\nn n\P(\cc_n(1,2)\in \cdot)\vague \Pi \ee vaguely as measures on
$\Roo$. This follows from Proposition \ref{Kall cor15.16}.
\end{proof}
\begin{remark}
It is instructive to recognize that  the L\'evy characteristics
$\sigma^2$ and $b$ bear no influence on the above lemma, and
consequently bear no influence on local weak convergence of the
associated graphs. This is because vague convergence pushes any
affect they have to the point $0$ which is not in $\Roo$. This
essentially tells us that $b$ has no effect on the LSD which is one reason why we were allowed to set it to 0 (this
statement is made rigorous by Theorem \ref{thm:main2}). The same is
not true for $\sigma^2$ since we must have $\sigma=0$ in order to
satisfy \eqref{eq:ui} (uniform square integrability) and therefore
to use Theorem \ref{thm:lwc to spectrum}. However, after one applies
the replacement procedure, \eqref{eq:ui} will once again be
satisfied.
\end{remark}
The following proposition utilizes Lemma
\ref{lem:ppp} to show local weak convergence to a PWIST.  It is a variant of
results in \cite[Sec. 3]{aldous1992asymptotics} (see also
\cite{aldous2001zeta, steele2002minimal, bordenave2011spectrum}).

\begin{prop}[Local weak convergence to a PWIST] \label{prop:LWC} %
Let $G_n[1]$ have
conductances $\{\cc^{\sigma}_n(j,k)\}_{j,k}$ which are modified \LK\
matrices with characteristics $(\sigma^2,0,\Pi)$ (modified as in Section \ref{replacement}). Then the local weak limit of
$(G_n[1])_{n\in\N}$ is a PWIST$(\sigma, \Pii)$.
\end{prop}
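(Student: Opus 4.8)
The plan is to establish local weak convergence in two stages, following the strategy of Aldous and Bordenave--Guionnet but tracking the cord-to-infinity structure introduced by the replacement procedure. First I would handle the ``tree part'': show that the rooted geometric graph on $\{1,\ldots,n\}$ with conductances $\{\cc_n^\sigma(j,k)\}$ restricted to the \emph{large} entries (those with $|\cc_n(j,k)|>h_n$) converges locally weakly to a PWIT$(\Pii)$. This is a direct application of Lemma \ref{lem:ppp}: by \eqref{LKcondition2} and Proposition \ref{Kall cor15.16}, the large entries out of a fixed vertex, viewed as conductances, form an array whose point measures converge vaguely to a Poisson random measure with intensity $\Pi(dx)$ on $\Roo$; inverting to resistances via \eqref{pi} gives intensity $\Pii$. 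The standard exploration argument then shows the $r$-neighborhoods converge: one explores outward from the root, at each newly reached vertex the fresh large-conductance edges are asymptotically an independent PRM (using the i.i.d.-in-rows structure and a union bound to rule out collisions/short cycles, exactly as in \cite{aldous1992asymptotics, bordenave2011spectrum}), so the limit is a tree with the PWIT branching structure.

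Next I would incorporate the $\sigma$-edges. After the replacement procedure, each vertex $v$ of the exploration has exactly one designated ``blank'' neighbor assigned the value $\sigma$, hence an edge of resistance $1/\sigma$. The key point is that, because the blank neighbor of $v$ is chosen uniformly among the (order-$n$ many) vertices $k$ with $|\cc_n(v,k)|\le h_n$, this neighbor is with high probability a ``fresh'' vertex not yet seen in the bounded-distance exploration, and moreover its own large-conductance edges and its own $\sigma$-edge are (asymptotically) independent of everything explored so far. Thus attaching the $\sigma$-edge to $v$ leads to a fresh independent copy of a PWIT rooted at the new vertex $\infty_v$, which then itself spawns a $\sigma$-edge, and so on — precisely the recursive backbone description of the PWIST$(\sigma,\Pii)$ in \eqref{nof}. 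When $\sigma=0$ the edge has resistance $1/\sigma=\infty$, so in the metric of $\GG_\star$ the vertex $\infty_v$ recedes to distance $\infty$ and does not appear in any finite $r$-neighborhood, consistently recovering the plain PWIT$(\Pii)$; the one-point compactification convention on $\Roo$ is what makes this limiting statement clean.

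To make the above rigorous I would fix a continuity point $r$ of the (random) limit and show that, for $n$ large, the $r$-neighborhood $\NN_{r,\emp}(G_n[1])$ is isomorphic as a rooted weighted graph to that of the PWIST, with the edge weights converging as in \eqref{edge limit}. The bookkeeping has two ingredients running in parallel: (i) for the large edges, the convergence $n\P(\cc_n(1,2)\in\cdot)\vague\Pi$ plus a second-moment/union-bound argument that the exploration process does not revisit vertices within distance $r$ (so the limit is genuinely a tree, and the number of vertices within distance $r$ is a.s. finite, using \eqref{levymeasure} to control the PRM near $0$); (ii) for the $\sigma$-edges, a counting argument that the uniformly-chosen blank partner of each explored vertex avoids the $o(1)$-fraction of already-explored or ``close'' vertices with probability $1-o(1)$, together with the observation that conditioning on the permutation $\alpha$ (the distance-ordering used in the replacement) does not disturb the required asymptotic independence since $\alpha$ is a function of the resistances, which within a bounded neighborhood are themselves converging.

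\textbf{Main obstacle.} I expect the delicate step to be (ii): controlling the dependence created by the replacement procedure. The choice of which blank entry becomes $\sigma$ depends on the random permutation $\alpha$, which is determined by \emph{all} the resistances (including the small ones being truncated), so one must argue carefully that, restricted to a bounded-distance neighborhood, this choice behaves like an independent uniform pick over a fresh vertex set. The cleanest route is probably to couple $G_n[1]$ with an idealized exploration in which blank partners are drawn from fresh labels, bound the total-variation cost of this coupling on the event $\{\NN_{r,\emp} \text{ has } \le m \text{ vertices}\}$ by $O(m^2/n) = o(1)$, and invoke that $m$ is tight as $n\to\infty$ (which follows from local weak convergence of the tree part). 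A secondary nuisance is verifying that the limit really is the \emph{same} object as the PWIST of \eqref{nof} rather than some decorated variant — i.e., that no additional short cycles survive between a PWIT and its attached $\infty$-copies — but this is again handled by the union bound ruling out cycles of bounded length.
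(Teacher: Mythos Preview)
Your proposal is correct and follows essentially the same route as the paper: invoke Lemma~\ref{lem:ppp} to get Poisson convergence of the large-conductance edges, run an exploration from the root to identify the limiting tree structure, observe that the replacement procedure contributes exactly one extra deterministic edge of resistance $1/\sigma$ per explored vertex, and use a stochastic-domination/coupling argument to show that non-tree edges have diverging resistances. The paper's proof differs only in packaging: it uses the Aldous-style $B$-ary truncation $G_n[1]^{B,H}$ (sort the $B$ smallest absolute resistances at each vertex, to depth $H$) rather than your direct $r$-neighborhood formulation, and it is considerably terser about the dependence through $\alpha$ that you flag as the main obstacle --- the paper simply asserts that the replacement contributes ``one more nonrandom resistance\ldots always $1/\sigma$'' and defers the no-collision/coupling step to \cite[Lemma 2.7]{bordenave2011spectrum}. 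Your explicit $O(m^2/n)$ total-variation coupling to an idealized fresh-label exploration is a clean way to handle what the paper leaves implicit, and is arguably the more honest treatment of that step.
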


\begin{proof} We follow \cite[Sec. 3]{aldous1992asymptotics} and \cite[Sec. 2.5]{bordenave2011spectrum}.
For each fixed realization of the
$\{\cc^{\sigma}_n(j,k), 1 \leq j,k \leq n\}$ we
consider their reciprocals, i.e., the resistances
$$\{\r^{\sigma}_n(j,k), 1 \leq j,k \leq n\}.$$
For
any $B,H\in\mathbb{N},$ such that
$$\sum_{\ell=0}^HB^\ell\leq n,$$
we define a rooted geometric subgraph $G_n[1]^{B,H}$ of
$G_n[1]$, whose vertex set is in bijection with a $B$-ary tree of
depth $H$ rooted at $1$. Let $V_n:=\{1,\ldots,n\}$. The bijection provides a partial index of
vertices of $G_n[1]$ as elements in \be J_{B,H} = \bigcup_{\ell=0}^H
\{1,\ldots, B\}^\ell\subset{\mathbb{N}_0^F}\ee where the indexing is
given by an injective map $$\pi_n:J_{B,H}\to V_n.$$ The map
$\pi_n$ easily extends to a bijection from some subset of
{$\mathbb{N}_0^F$} to $V_n$ and thus can be thought of as restrictions of the
maps of \eqref{pi2}.

We set $I_\emp= \{ 1 \}$ and set the preimage/index of the root $1$
to be $ \pi_n^{-1} (1) = \emp$. We next index the $B$ vertices in
$V_n \setminus I_{\emp}$ which have the $B$ smallest absolute values
among
$\{\r^{\sigma}_n(1,k)\}_{2 \leq k \leq n}$. The
$k$th smallest absolute value is given the index $\emp k =
\pi_n^{-1} (v)$, $1 \leq k\leq B$. As in the discussion preceding
\eqref{nof},  we have written the vector $\emp k$ using
concatenation. Breaking ties using the lexicographic
order, this defines the first generation.

Now let $I_1$ be the union of $I_\emp$ and the $B$ vertices that
have been selected. If $H\geq2$, we repeat the indexing procedure
for the vertex indexed by $\emp 1$ (the first child of $\emp$)  on
the set $V_n \setminus I_1$. We obtain a new set $\{11,\ldots,1B\}$
of vertices sorted by their absolute resistances. We define $I_{2}$
as the union of $I_1$ and this new collection. Repeat the procedure
for $\emp 2$ on $V_n \setminus I_{2}$ and obtain a new set
$\{21,\ldots,2 B\}$.  Continuing on through $\{B1,\ldots ,BB\}$, we
have constructed the second generation, at  depth $2$, and we have
indexed a total of $(B^{3} - 1)/(B-1)$ vertices. The indexing
procedure is repeated through depth $H$ so that $(B^{H+1} -
1)/(B-1)$ vertices are sorted. Call this set of vertices $V_n^{B,H}
= \pi_n (J_{B,H})$. The  subgraph of $G_n[1]$ generated by the
vertices $V_n^{B,H}$ is denoted $G_n[1]^{B,H}$ (by ``generated'' we
mean that we include only edges with endpoints in the  specified
vertex set). It is the modification of $G_n[1]$ such that any edge
with at least one endpoint in the complement of $V_n^{B,H}$ is given
an infinite resistance. In $G_n[1]^{B,H}$, the elements of
$\{u1,\ldots,{u}B \}$ are the children of $u$.

Note that while the vertex set $V_n^{B,H}$ has a natural tree
structure, $G_n[1]^{B,H}$ is actually a subgraph of a complete
graph which may not be a tree.

Let $G_\infty[\emp]$ be a  PWIST$(\sigma, \Pii)$, or a PWIT($\Pii$)
if $\sigma=0$, and write $G_\infty[\emp]^{B,H}$ for the finite
rooted geometric graph obtained by the sorting procedure just
described. Namely, $G_\infty[\emp]^{B,H}$ consists of the subtree
with vertices of the form $u\in J_{B,H}$, with resistances between
these vertices inherited from the infinite tree. If an edge is not
present in $G_\infty[\emp]^{B,H}$, we may think of it as being
present but having infinite resistance.

Since the conductances $\{\cc^{\sigma}_n(j,k)\}$ by definition are real with a symmetric distribution, we may without loss of generality replace $\sum_{j=1}^n \pm|\cc_n(1,j)|$ with $\sum_{j=1}^n \cc_n(1,j)$  in \eqref{LKcondition}.
We use Lemma \ref{lem:ppp} on the unmodified matrices (with real and symmetrically distributed entries) to conclude that $\sum_{k=1}^n \delta_{\cc_n(1,k)}$
converges vaguely to a Poisson random measure with intensity $\Pi$.
{For $h_n\to 0$}, the truncation
$\cc(n,k)1_{|\cc(n,k)|\le h_n}$  does not affect this vague
convergence. Note that besides the random resistances on edges given by the Poisson random measure, there is also one
more nonrandom resistance given by  the replacement procedure (for $n$ large enough), and the value is always $1/\sigma$.
It is easily verified that the property in \eqref{edge limit} is satisfied
by each edge $(u,v)$ of the tree $G_\infty[\emp]$.

It remains to check that for each $B$ and $H$, our maps $\pi_n$ are
graph isomorphisms for $n$ large enough. In other words, we must
check that for each edge in $G_\infty[\emp]^{B,H}$ with an infinite
resistance, the corresponding edges of $\(G_n[1]^{B,H}\)_{n\in \N}$ (for
$n$ large enough), must have resistances which diverge to infinity.
The divergence of these resistances to infinity follows from a
standard coupling argument which shows that these resistances
stochastically dominate i.i.d. variables with distribution
$\r_n(1,2)$ which clearly diverges as $n\to\infty$ (see for example,
Lemma 2.7 in \cite{bordenave2011spectrum}).
\end{proof}

\section{Proofs of the main results}\label{sec:proofs}

In the case that a \LK\ ensemble $\(\cc_n\)_{n\in\N}$ {is real and} has characteristics of the form $(0,0,\Pi)$, then results of Section \ref{sec:nowigner} (Theorem \ref{thm:lwc to spectrum}, Proposition \ref{prop:LWC}) imply
the existence of the LSD in expectation.  On the other hand,
if $|\cc_n(1,2)|$ is a.s. uniformly bounded in $n$, Proposition \ref{prop:moment bound}
proves the existence of the LSD in expectation.  

%
We turn now to the general assumptions of Theorems \ref{thm:main} and \ref{thm:main2}.
 Before proving the main results, we have three preliminary lemmas.
Our first preliminary lemma allows us to extend from convergence in expectation to almost sure
convergence. It is a concentration of measure result first
noticed in \cite[Theorem 1]{guntuboyina2009concentration}  and later in
\cite[Lemma C.2]{bordenave2011nonHermitian}.  
 We state it here
without proof.

\begin{lem}[Concentration for ESDs]\label{lem:com}
Let $\hh_n$ be an $n\times n$ Hermitian matrix whose rows are
independent (as vectors).  For every real-valued continuous $f(x)$ going to 0 as $x\to\pm\infty$ such that $\|f\|_{\text{TV}}\le 1$, and for every
$t\ge 0$,
$$\P\(\lb \int_\R f \, d\mu_{\hh_n} - \E\int_{\R} f \,d\mu_{\hh_n}\rb\ge t\)\le 2\exp\(-nt^2/2\)$$
\end{lem}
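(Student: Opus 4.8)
The plan is to deduce the bound from a standard Lipschitz-type concentration inequality for functions of independent vectors, specifically a bounded-differences (McDiarmid) argument applied to the random rows. First I would fix a continuous $f:\R\to\R$ with $f(x)\to 0$ as $x\to\pm\infty$ and $\|f\|_{\mathrm{TV}}\le 1$, and set $F(\hh_n):=\int_\R f\,d\mu_{\hh_n}=\tfrac1n\sum_{j=1}^n f(\lambda_j)$ where $\lambda_1,\dots,\lambda_n$ are the eigenvalues of $\hh_n$. View $F$ as a function of the $n$ independent random rows $X_1,\dots,X_n$ (the upper-triangular part together with the diagonal determines the Hermitian matrix, so each row above the diagonal can be taken as the independent coordinate; alternatively one treats the rows as exchangeable-but-independent vectors exactly as in the hypothesis). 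The key deterministic input is a rank/interlacing bound: changing a single row (and the corresponding column, to preserve Hermiticity) is a perturbation of $\hh_n$ of rank at most $2$, hence by the Cauchy interlacing/eigenvalue-interlacing theorem the multiset of eigenvalues changes in such a way that $\sum_j |f(\lambda_j)-f(\lambda_j')|\le 2\,\|f\|_{\mathrm{TV}}\le 2$. Dividing by $n$ gives the bounded-differences constant $c_j=2/n$ for each coordinate $j$.

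The main step is then to invoke the bounded-differences inequality (McDiarmid): if $F$ satisfies $|F(x)-F(x')|\le c_j$ whenever $x,x'$ differ only in coordinate $j$, then $\P(|F-\E F|\ge t)\le 2\exp\!\big(-2t^2/\sum_j c_j^2\big)$. With $c_j=2/n$ and $n$ coordinates, $\sum_j c_j^2 = n\cdot 4/n^2 = 4/n$, so $2t^2/\sum_j c_j^2 = 2t^2/(4/n)=nt^2/2$, which is exactly the claimed bound $2\exp(-nt^2/2)$. I would state McDiarmid's inequality as a cited black box (e.g.\ from a standard reference), since the paper is already comfortable citing such tools, and note that it applies verbatim when the coordinates are independent vectors rather than scalars.

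The one place requiring care — and the step I expect to be the main (if modest) obstacle — is justifying the bounded-differences constant, i.e.\ the inequality $\sum_j|f(\lambda_j)-f(\lambda_j')|\le 2\|f\|_{\mathrm{TV}}$ under a rank-$2$ (or rank-$1$ after a further decomposition) Hermitian perturbation. The clean way to see this: a Hermitian perturbation of rank $k$ makes the eigenvalues interlace in the sense that, after sorting, $\lambda_i'\in[\lambda_{i+k},\lambda_{i-k}]$ (with the obvious boundary conventions); consequently the combined sorted list of old and new eigenvalues can be split into $k$ interleaved chains along each of which $f$ varies by at most its total variation, giving the factor $k\|f\|_{\mathrm{TV}}$. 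Here $k=2$ for a row-plus-column change, yielding $2\|f\|_{\mathrm{TV}}\le 2$. (One may also first symmetrize: change only the row, at the cost of temporarily leaving the Hermitian class, or argue directly with the rank-$2$ bound; either route gives the same constant.) Once this deterministic inequality is in hand, the probabilistic conclusion is immediate from McDiarmid, and the hypotheses that $f$ is continuous and vanishes at infinity are used only to guarantee that $F$ is a well-defined bounded measurable function of the rows so that the inequality applies.
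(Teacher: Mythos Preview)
The paper does not prove this lemma; it is stated without proof and attributed to \cite{guntuboyina2009concentration} (Theorem~1) and \cite{bordenave2011nonHermitian} (Lemma~C.2). Your sketch is correct and is precisely the argument used in those references: the rank/interlacing bound gives a bounded-differences constant $2/n$ per row, and McDiarmid's inequality then yields the stated exponent $nt^2/2$.

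One small imprecision worth tightening: you write $\sum_j |f(\lambda_j)-f(\lambda_j')| \le 2\|f\|_{\mathrm{TV}}$, but what you actually need (and what interlacing delivers) is the weaker $\bigl|\sum_j f(\lambda_j)-\sum_j f(\lambda_j')\bigr| \le 2\|f\|_{\mathrm{TV}}$. The clean route is to note that a rank-$k$ Hermitian perturbation moves the cumulative eigenvalue counting function by at most $k$ uniformly, i.e.\ $\|F_A-F_B\|_\infty \le k/n$ in Kolmogorov distance, and then integrate by parts against $df$ (using $f\to 0$ at $\pm\infty$) to get the bound $k\|f\|_{\mathrm{TV}}$. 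With that adjustment the argument is complete.
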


The next lemma verifies the self-adjointness of PWISTs required to use Theorem \ref{thm:lwc to spectrum}.

\begin{lem}[Self-adjointness of PWIST operators]\label{SelfAdjointness}
Suppose $G_\infty[\emp]=(V_\infty,E_\infty,\r_\infty)$ is a PWIST($\sigma,\Pii$). Then the associated random conductance operator $\cc_{G_\infty}$ on $L^2(V_\infty)$, as defined in
\eqref{aPWIST}, is essentially self-adjoint.
\end{lem}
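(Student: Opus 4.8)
The plan is to prove essential self-adjointness of $\cc_{G_\infty}$ by exhibiting the PWIST operator as a limit of operators on finite rooted subtrees and controlling the graph structure enough to invoke a standard criterion for self-adjointness of adjacency-type operators on trees with (random) bounded geometry. A clean route is to use the fact that a symmetric operator on a core is essentially self-adjoint if its deficiency spaces are trivial, i.e. if the only $L^2$ solution of $\cc_{G_\infty}^* \psi = \pm i\psi$ is $\psi = 0$. On a tree, the eigenvalue equation $\cc_{G_\infty}\psi = z\psi$ is a second-order recursion along each branch: for a vertex $v$ with parent $p(v)$ and children $w$,
\[
\frac{1}{\r_\infty(p(v),v)}\,\psi(p(v)) + \sum_{w \text{ child of } v} \frac{1}{\r_\infty(v,w)}\,\psi(w) = z\,\psi(v).
\]
I would show that any $\ell^2$ solution of this recursion with $z = \pm i$ must vanish. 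The key point is that the conductances $1/\r_\infty(v,w)$ to the children of a fixed vertex are the atoms of a Poisson random measure with intensity $\Pi$ on $\Roo$ (by \eqref{pi}), so almost surely there are infinitely many children with conductance bounded away from $0$ in modulus — in fact the conductances are square-summable since $\sum_k |\cc(k)|^2$ is a.s. finite (it is the marginal of a subordinator), but are not absolutely summable in general. Additionally each vertex $v$ in the PWIT part carries a cord-to-infinity edge of conductance $\sigma$ to the root $\infty_v$ of an independent PWIT.

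The main steps, in order: (1) Reduce to triviality of deficiency indices via \cite[Thm 5.4]{weidmann1980linear}, already invoked earlier in the excerpt for closability. (2) Set up the eigenvalue recursion on the tree $\N_0^F$ and, using the Poisson structure, establish an a.s. lower bound ensuring that at each vertex the "branching is rich enough": concretely, that for a.e. realization, every vertex has infinitely many descendants and the subtree rooted at any vertex is itself (in distribution) a PWIST, so self-similarity can be exploited. (3) Run the standard summation-by-parts / Green's-formula argument: if $\psi \in \ell^2$ solves $\cc_{G_\infty}\psi = i\psi$, then $0 = \langle \psi, \cc_{G_\infty}\psi\rangle - \langle \cc_{G_\infty}\psi,\psi\rangle = 2i\|\psi\|^2$ once one justifies that the boundary terms at infinity vanish — this is where summability of the conductances and the $\ell^2$ hypothesis combine, using Cauchy–Schwarz on $\sum_{v\sim w}\frac{1}{\r_\infty(v,w)}(\overline{\psi(v)}\psi(w) - \psi(v)\overline{\psi(w)})$ truncated to finite balls $\NN_{r,\emp}$. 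Because $G_\infty$ is a tree with a.s. locally finite geometry in the metric $d$ of \eqref{distance} (each ball contains finitely many vertices — this is exactly the rooted-geometric-graph property, which one must check holds a.s. for the PWIST), the truncated boundary terms can be made to vanish along a subsequence of radii by an averaging/Fatou argument applied to $\sum_r |\text{boundary}(r)|^2 < \infty$.

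I expect the main obstacle to be step (3) restricted to the cord edges: the conductance $\sigma$ on cords to infinity is a fixed constant rather than a decaying Poisson atom, and each cord leads into a fresh independent PWIST, so one must argue that the contribution of the infinitely many cords emanating from the infinitely many vertices of each PWIT is still controlled in $\ell^2$. The saving fact is that an $\ell^2$ function must be small on all but finitely many of these infinitely many pieces, and the cord conductances, though constant, are attached at a set of vertices whose $d$-distances from the root diverge (since resistances accumulate), so the relevant boundary sums still localize. Alternatively — and this may be cleaner — one can avoid the boundary analysis entirely by invoking a Nelson-type analytic-vectors argument or the Carleman/Jacobi-matrix criterion branch-by-branch: along any single ray the operator restricts to a Jacobi matrix whose off-diagonal entries are a.s. square-summable Poisson atoms times the cord contributions, and such Jacobi matrices are self-adjoint in the limit-point case, which holds a.s. I would carry out the deficiency-index version as the primary argument and mention the Jacobi-matrix viewpoint as the structural reason it works.
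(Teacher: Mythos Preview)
Your general framework (show deficiency indices vanish by a Green's-formula/boundary-term argument on an exhaustion by finite subgraphs) is the right shape, and in fact the criterion the paper ultimately invokes (Lemma~A.3 of \cite{bordenave2011spectrum}) is exactly of this type. But there is a genuine gap in your step~(3): you propose to exhaust by the metric balls $\NN_{r,\emp}$ and kill the boundary terms ``along a subsequence of radii by an averaging/Fatou argument.'' This does not work as stated. Every vertex of a PWIST has infinitely many children, so the boundary of a metric ball carries infinitely many edges; the relevant boundary quantity is controlled by sums like $\sum_{v\notin B_r,\,v\sim u}|\cc_{G_\infty}(u,v)|^2$ for $u$ on the boundary, and for a \emph{generic} exhaustion (in particular for metric balls) there is no reason these sums are uniformly bounded in $r$. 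Your Fatou heuristic would need $\sum_r(\text{boundary term at }r)^2<\infty$, but you have no mechanism to establish that, and the cord edges of constant conductance $\sigma$ make this worse, not better. The Jacobi-matrix alternative is also insufficient: limit-point criteria along individual rays do not by themselves give self-adjointness on a tree with infinite branching.

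What the paper does differently is precisely to \emph{construct} a good exhaustion rather than use metric balls. The key observation is that for $\kappa$ large, each vertex independently satisfies $\sum_{v\in\N[\emp]}|\cc_{G_\infty}(\emp,v)|^2\le\kappa$ with positive probability, so $\E\tau_\kappa<1$. This subcriticality feeds into a percolation-type argument (Proposition~A.2 of \cite{bordenave2011spectrum}) producing connected finite sets $V_n\uparrow V_\infty$ such that $\sum_{v\notin V_n,\,v\sim u}|\cc_{G_\infty}(u,v)|^2<\kappa$ for \emph{every} $u\in V_n$. With that uniform boundary bound in hand, the deficiency-index computation goes through cleanly (Lemma~A.3 of \cite{bordenave2011spectrum}). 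The missing idea in your proposal is this percolation construction of the exhaustion; once you have it, your step~(3) is essentially the content of that lemma.
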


\begin{proof}
Denote the children of the root $\emp$ of a PWIST($\sigma,\lambda_\Pi$) by $\N=\N[\emp]$ where they are ordered according to the absolute value of the conductances  on the edges where the edge to $1$ has the largest absolute conductance. For $\kappa>0$ as chosen below, define the random variable
$$\tau_\emp:= \inf
\{J: \sum_{j=J}^\ff |\cc_{G_\infty}(\emp,j)|^2\le\kappa\}
$$
and define the i.i.d. random variables $\{\tau_v\}$ similarly by considering the conductances on $\N[v]$ (in place of $\N[\emp]$).
By the integrability conditions on  L\'evy measure $\Pi$, we
may choose $\kappa$ large enough so that
 $\E\tau_\emp<1$.
We may therefore employ the proof of Proposition A.2 in \cite{bordenave2011spectrum} to show that for any PWIST, $G_\infty=(V_\infty, E_\infty,\r_\infty)$,
 there is a constant $\kappa>0$ and a
sequence of connected finite increasing subsets $\(V_n\)_{n\in\N}$ whose union is $V_\infty$, and
such that for all $n$ and $u\in V_n$
$$\sum_{v\notin V_n:v\sim u}|\cc_{G_\infty}(u,v)|^2<\kappa.$$
Finally, the existence of such a $\kappa$ allows us to use Lemma A.3 in \cite{bordenave2011spectrum} to conclude that any PWIST is essentially self-adjoint. Thus its closure is self-adjoint.
\end{proof}

The final preliminary lemma, similar to arguments in \cite{benarous2008spectrum}, is used to show that the truncation in \eqref{finite nu moments} does not
effect the LSD too much.
For any truncation level $\tau>0$, let $\tau\cc_n$ be a matrix with entries given by
\be\label{truncated matrices}
\tau\cc_n(j,k):=\cc_n(j,k)1_{\{|\cc_n(j,k)|\le \tau\}}.
\ee
\begin{lem}[Large deviation estimate for the rank of a truncation]\label{LDP lemma}
For every $\eps>0$ and $\tau\gg 0$ (large enough depending on $\eps$), there is a  $\delta_{\eps,\tau}>0$  such that
$$\P(\text{rank}(\cc_n-\tau\cc_n)/n\ge \eps)\le \exp\(-\delta_{\eps,\tau}n\).$$
\end{lem}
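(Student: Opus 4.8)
The plan is to reduce the rank bound to a large-deviation estimate for a sum of i.i.d. Bernoulli random variables. The matrix $\cc_n-\tau\cc_n$ has entry $(j,k)$ equal to $\cc_n(j,k)1_{\{|\cc_n(j,k)|>\tau\}}$, which is nonzero precisely when the edge weight exceeds $\tau$ in modulus. Thus $\text{rank}(\cc_n-\tau\cc_n)$ is at most the number of rows containing such a ``large'' entry, which in turn is at most $2$ times the number of large entries $N_{n,\tau}:=\#\{(j,k):j<k,\ |\cc_n(j,k)|>\tau\}$ (a symmetric pair of large entries kills at most two rows/columns). Write $p_{n,\tau}:=\P(|\cc_n(1,2)|>\tau)$. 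By \eqref{LKcondition} together with Proposition \ref{Kall cor15.16}, $n\,p_{n,\tau}\to \Pi(\{x:|x|>\tau\})=:c_\tau$, and since $\Pi$ satisfies the integrability condition \eqref{levymeasure}, $c_\tau\to 0$ as $\tau\to\infty$; in particular, given $\eps>0$, we may fix $\tau$ large enough that $c_\tau<\eps/8$, hence $n\,p_{n,\tau}\le \eps n/8$ for $n$ large.

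Now $N_{n,\tau}$ is a sum of $\binom n2$ i.i.d. Bernoulli$(p_{n,\tau})$ variables, so $\E N_{n,\tau}=\binom n2 p_{n,\tau}\le \tfrac n2\cdot n\,p_{n,\tau}\le \eps n^2/16$. We want $\P(N_{n,\tau}\ge \eps n/2)$, but note this is automatically tiny: $\E N_{n,\tau}$ is of order $n$ while the threshold is also of order $n$, with a constant gap. More precisely, since $\E N_{n,\tau}\le \tfrac12\,n\,p_{n,\tau}\cdot n \le \tfrac{\eps}{8} n\cdot \tfrac n n \cdot n$—let me instead just compare to the mean directly: $\E N_{n,\tau}=\binom n2 p_{n,\tau}=\tfrac{n-1}{2}(np_{n,\tau})\le \tfrac{n-1}{2}\cdot\tfrac{\eps}{8}<\tfrac{\eps n}{16}<\tfrac{\eps n}{4}$ for $n$ large. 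Then by the Chernoff/Bernstein bound for sums of independent Bernoullis (e.g.\ the standard multiplicative Chernoff bound $\P(S\ge a)\le e^{-a/2}$ whenever $a\ge 6\E S$, or Bennett's inequality), applied with $a=\eps n/4$ and $S=N_{n,\tau}$, we get $\P(N_{n,\tau}\ge \eps n/4)\le \exp(-c\,\eps n)$ for some absolute constant $c>0$ once $n$ is large enough. Since $\text{rank}(\cc_n-\tau\cc_n)\le 2N_{n,\tau}$, the event $\{\text{rank}(\cc_n-\tau\cc_n)/n\ge\eps\}$ is contained in $\{N_{n,\tau}\ge \eps n/2\}\subset\{N_{n,\tau}\ge\eps n/4\}$, giving the claim with $\delta_{\eps,\tau}:=c\eps$ (absorbing the finitely many small $n$ into the constant, or shrinking $\delta$).

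The only genuinely delicate point is the passage from ``$n p_{n,\tau}\to c_\tau$'' to a uniform-in-$n$ bound $n p_{n,\tau}\le \eps n/8$: this requires knowing that the convergence furnished by Proposition \ref{Kall cor15.16} is to a finite limit $c_\tau$ that can be made small, which is exactly the statement that $\Pi$ has no mass at infinity and finite mass outside a neighborhood of $0$—both immediate from \eqref{levymeasure}. One should also be slightly careful that $\tau$ is chosen as a continuity point of $|x|\mapsto\Pi(\{|x|>\cdot\})$ so that the vague convergence in Proposition \ref{Kall cor15.16} yields convergence of $\Pi(\{|x|>\tau\})$; since $\Pi$ has at most countably many atoms this costs nothing. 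Everything else is a routine Chernoff estimate, and the $\pm$ signs and the complex/real distinction play no role since only the moduli of the entries enter.
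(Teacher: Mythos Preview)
Your argument is correct, though it differs in its initial decomposition from the paper's. The paper bounds the rank by the number of rows containing a large entry, splitting each row along the diagonal into ``upper'' and ``lower'' parts to obtain independent (but not identically distributed) indicators $1_{U_{jn}}$ and $1_{L_{jn}}$; it then uses stochastic domination of each $1_{U_{jn}}$ by $1_{U_{1n}}$ to pass to an i.i.d.\ sum before applying the Bernoulli large-deviation bound. You instead bound the rank by $2N_{n,\tau}$, twice the number of large upper-triangular entries, which is already a sum of $\binom n2$ i.i.d.\ Bernoulli$(p_{n,\tau})$ variables---so no domination step is needed. Both routes rest on the same input $n\,p_{n,\tau}\to\Pi(\{|x|>\tau\})$ from Proposition~\ref{Kall cor15.16} and the same Chernoff-type tail bound; your version is marginally more direct at the cost of a slightly cruder rank inequality (the paper's bound $\sum_j(1_{U_{jn}}+1_{L_{jn}})$ never exceeds your $2N_{n,\tau}$), which is irrelevant for the conclusion. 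Your remark about choosing $\tau$ to be a continuity point of $\Pi$ is a good observation that the paper leaves implicit. The only cosmetic issue is the arithmetic detour where you first write $\E N_{n,\tau}\le \eps n^2/16$ before correcting to $\E N_{n,\tau}<\eps n/16$; the latter is what you need and what you in fact establish.
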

\begin{proof}
Fix $\eps>0$ and consider $\tau$ large enough (specified below).
Define the events $$U_{jn}:=\{\text{there exists } k \text{ such that }k>j\text{ and }|\cc_n(j,k)|>\tau\}$$
$$L_{jn}:=\{\text{there exists } k \text{ such that }k<j\text{ and }|\cc_n(j,k)|>\tau\}$$
and note that
\be
\text{rank}(\cc_n-\tau\cc_n)\le \sum_{j=1}^n \(1_{U_{jn}}+1_{L_{jn}}\).
\ee
We split rows of the matrix along the diagonal to handle the dependence (due to the self-adjointness requirement) among the indicator random variables:
\bea\label{split dependence}
\nn\P(\text{rank}(\cc_n-\tau\cc_n)\ge 2n\eps)&\le& \P\(\sum_{j=1}^n 1_{U_{jn}}\ge n\eps\)+\P\(\sum_{j=1}^n1_{L_{jn}}\ge n\eps\)\\
&\le& 2\P\(\sum_{j=1}^n 1_{U_{jn}}\ge n\eps\)\nn\\
&\le& 2\P\(\sum_{j=1}^n 1^{(j)}_{U_{1n}}\ge n\eps\)
\eea
where $\{1^{(j)}_{U_{1n}}\}_{j=1}^n$ are independent copies of $1_{U_{1n}}$.
The last step follows since  the independent variables $\{1_{U_{jn}}\}_{j=1}^n$ are each stochastically dominated by $1_{U_{1n}}$.

Since the triangular array $\{\cc_n(1,k), 1\le k\le n\}_{n\in\N}$ satisfies \eqref{LKcondition},
$$\lim_{n\to\infty}\P(U_{1n})=1-\exp\{-\Pi([\tau,\infty))\},$$
so we may choose $\tau$ large enough
so that $$\sup_{n}\P(U_{1n})=p<\epsilon.$$  The lemma follows by applying a standard large deviation estimate for i.i.d. Bernoulli($p$) random variables
to  the right side of \eqref{split dependence}.
\end{proof}

This last lemma is used in conjunction with a metric which is compatible with weak convergence. Let
 $$\|f\|_{\LL}:=\sup_{x\neq y}\frac{|f(x)-f(y)|}{|x-y|}+\sup_{x}|f(x)|$$
 Lemma 2.1 in \cite{benarous2008spectrum} says the following variant of the Dudley distance gives a topology which is compatible with weak convergence:
\be\label{dudley}
d_1(\mu,\nu):=\sup_{\|f\|_{\LL}\le 1, f\uparrow}\lb\int f \,d\mu-\int f\,d\nu\rb.
\ee
Moreover, Lidskii's estimate (see Eq. 8 in \cite{benarous2008spectrum}) implies
\be\label{lidskii}
d_1(\mu_{\cc_n},\mu_{\tau\cc_n})\le \frac{\text{rank}(\cc_n-\tau\cc_n)}{n}.
\ee

\begin{proof}[Proof of Theorems \ref{thm:main} and \ref{thm:main2}]
Let us first state some simplifications for the task of showing that the LSD exists
as a weak limit, almost surely.

First of all, by the Borel-Cantelli lemma and Lemma \ref{lem:com}, it is enough to
show weak convergence of $\(\E\Spec_{\cc_n}\)_{n\in\N}$ to $\E\Spec_{\cc_\infty}$.
Next, by exchangeability, it is enough to show weak convergence of the expected spectral measures associated to the basis vector $e_1$.
Finally, by Lemma \ref{lem:2.4.4}, it is equivalent to show convergence
of the \CS\ transforms of these expected spectral measures for each $z\in\C_+$ (the limit will be a probability measure since
it is the spectral measure associated to a unit vector).

 Choose a \LK\ ensemble $\(\cc_n\)_{n\in\N}$ and
let $\(\tau_m\)_{m\in\N}$ be a sequence of positive truncation levels which go to infinity.
For each truncation level $\tau_m$, consider a new sequence of matrices $\(\tau_m\cc_n\)_{n\in\N}$ given by \eqref{truncated matrices}. Recalling our choice of $h_n$ from Section \ref{replacement},
we also consider their modifications $\(\tau_m\cc_n^\sigma\)_{n\in\N}$ (truncation occurs before modification).

Fix $m$. Each modified matrix sequence $\(\tau_m\cc_n^\sigma\)_{n\in\N}$ satisfies the hypotheses of Proposition \ref{prop:LWC},
thus the associated graphs have a PWIST($\sigma,\lambda^{(m)}_\Pi$) as their local weak limit as $n\to\infty$, where
$\lambda^{(m)}_\Pi$ is the intensity $\lambda_\Pi$ restricted to the set $$(-\infty,-1/\tau_m]\cup[1/\tau_m,\infty).$$
The closure of the associated limiting operator
is self-adjoint by Lemma \ref{SelfAdjointness}.
Moreover, by  Proposition \ref{Kall cor15.16} and the properties of the
replacement procedure, we have for each $j\in\N$ that
\be
\lim_{\eps\searrow 0}\lim_{n\to\infty}
\sum_{k=1}^n
\Var\( \ss^{\sigma}_n(j,k)1_{\{|\ss^{\sigma}_n(j,k)|\le
\eps\}}\)=0 \ee which is equivalent to \eqref{eq:ui} since the
entries $\ss^{\sigma}_n(j,k)1_{\{|\ss^{\sigma}_n(j,k)|\le
\eps\}}$ have a real  distribution which is symmetric for $\epsilon<\sigma$ (the truncation $\tau_m$ is unnecessary due to $1_{\{|\ss^{\sigma}_n(j,k)|\le
\eps\}}$).

By the above considerations, we may use Theorem \ref{thm:lwc to spectrum} and the
argument below \eqref{eq:resolvent identity} to conclude Theorem \ref{thm:main2} for each sequence $\(\tau_m\cc_n^\sigma\)_{n\in\N}$.
Thus, the expected LSD of $\(\tau_m\cc_n^\sigma\)_{n\in\N}$, denoted by $\E\mu_{\tau_m\cc_\infty^\sigma}$, is
the expected spectral measure at $e_\emp$ for the self-adjoint random conductance operator $\tau_m\cc_\infty^\sigma$
associated to a PWIST($\sigma,\lambda^{(m)}_\Pi$).

Now take the local weak limit of the PWIST($\sigma,\lambda^{(m)}_\Pi$) graphs as $m\to\infty$.  Since these graphs are truncations
of a PWIST($\sigma,\lambda_\Pi$), it is clear that their local weak limit is just a PWIST($\sigma,\lambda_\Pi$). We may therefore
apply Theorem \ref{thm:lwc to spectrum} once more to conclude that the expected spectral measures at $e_\emp$ of
the PWIST($\sigma,\lambda^{(m)}_\Pi$) operators converge weakly to the expected spectral measure at $e_\emp$ of a PWIST($\sigma,\lambda_\Pi$)
operator which we denote by $\E\mu_{\cc^\sigma_\infty}$.
Thus, for every $\eps>0$ we can choose $m$ large enough so that $$d_1(\E\mu_{\tau_m\cc_\infty^\sigma},\E\mu_{\cc^\sigma_\infty})<\eps/3$$
and so that $\delta_{\eps,\tau_m}>0$ in Lemma \ref{LDP lemma}.

Eq. \eqref{qi's} and Propositions \ref{lem:moments} and \ref{prop:moment bound} show that the expected LSD for  $\(\tau_m\cc_n\)_{n\in\N}$ exists. Moreover, by Proposition \ref{lemma:same}, it is equal to $\E\mu_{\tau_m\cc_\infty^\sigma}$.
So we may choose $n_0$ large enough so that $n>n_0$ implies
$$d_1(\E\mu_{\tau_m\cc_n},\E\mu_{\tau_m\cc_\infty^\sigma})<\eps/3.$$

Lemma \ref{LDP lemma} and \eqref{lidskii}, show that we may finally choose $n_1$ large enough so that $n>n_1$ implies
$$d_1(\E\mu_{\cc_n},\E\mu_{\tau_m\cc_n})<\eps/3.$$

Combining the above, we have for all $n>\max(n_0,n_1)$,
$$d_1(\E\mu_{\cc_n},\E\mu_{\cc^\sigma_\infty})<\eps$$
and so the ESDs of
$\(\cc_n\)_{n\in\N}$ converge weakly in expectation (and thus a.s.) to $\E\mu_{\cc^\sigma_\infty}$ which is the expected spectral measure at $e_\emp$ of $\cc_\infty^\sigma$ associated to a PWIST($\sigma,\lambda_\Pi$).

The claim that $\mu_{\cc_\infty}$ has bounded support if and only if $\Pi$ is trivial, follows from the remark at the very end of Section \ref{sec:moment method}.
\end{proof}

\begin{proof}[Proof of Corollary \ref{cor2}]
The corollary follows from Theorem 2.1 in \cite{bordenave2011nonHermitian} since it is enough to show the existence of a limiting singular value distribution.  We give a brief outline here and refer the reader to \cite{bordenave2011nonHermitian}
for more details.
Let $\{\sigma_j\}_{j=1}^{n}$ denote the
singular values of the ${n}$th matrix in the sequence $(\aa_n)_{n\in\N}$ and define the symmetrized empirical measure
$$\sigma_{\aa_n}:=\frac{1}{2n}\sum_{j=1}^n(\delta_{\sigma_j}+\delta_{-\sigma_j}).$$
The idea is to associate a $2n\times 2n$ matrix $\BB_{n}$ to each $\aa_n$ by thinking of $\BB_n$ as an $n\times n$ matrix with entries
given by the $2\times 2$ matrices
$$\BB_{n}(j,k):=\, \begin{bmatrix}  0 & \aa_{n}(j,k) \\ \bar\aa_n(j,k) & 0
\end{bmatrix}.$$
Through a permutation of entries, $\BB_n$ is similar to the block matrix
$$\begin{bmatrix}  0 & \aa_{n} \\ \bar\aa_n^* & 0
\end{bmatrix}$$
whose eigenvalues are $\pm\sigma_k(\aa_{n})$. Thus the ESD of $\BB_n$ is precisely equal to $\sigma_{\aa_n}$, and we know that the LSD of $(\BB_n)_{n\in\N}$
exists by Theorem \ref{thm:main}.
\end{proof}

\begin{proof}[Proof of Proposition \ref{cor1}]
The proof is an application of the resolvent identity.  For details, we refer the reader to Proposition 2.1 in \cite{klein1998extended} or Theorem
4.1 in \cite{bordenave2011spectrum}. The latter proof works in our setting almost word for word.
\end{proof}

\appendix
\section{Some additional tools}
\paragraph{Infinite divisibility.}
 The following important set of criteria for convergence to an
infinitely divisible law with characteristics $(\sigma^2,b,\Pi)$ was
found independently by Doeblin and Gnedenko (see Corollary 15.16 in
\cite{kallenberg2002foundations}). For $0<h<1$, define
$$\sigma_h^2:=\sigma^2+\int_{|{{} x}|\le h}{{{} x}^2}\, \Pi(d{{} x})\quad\text{and}\quad b_h:=b-\int_{h<|x|} {\frac{x}{1+x^2}}\,\Pi(d{{} x}).$$
Also, let $\overline{\R }$ be the one-point compactification of
$\R $.
\begin{prop}[Convergence criteria for triangular
arrays]\label{Kall cor15.16} Suppose
$\{\cc(n,k),1\le k\le n\}_{n\in\N}$ is a triangular
array of random variables such that each row consists of
i.i.d. random variables. The sum
$$\sum_{j=1}^n \cc(n,j)$$
converges in distribution to an $ID(\sigma^2,b,\Pi)$ random variable  if and only if for
any $0<h<1$ which is not an atom of $\,\Pi$,
\begin{itemize}
\item $n\P(\cc(n,1)\in \cdot){\vague} \Pi$\ on \ $\overline{\R }\backslash\{0\}$,
\item $n\E\( |\cc(n,1)|^21_{\{|\cc(n,1)|\le h\}}\)\to\sigma^2_h$,
\item $n\E\( \cc(n,1)1_{\{|\cc(n,1)|\le h\}}\)\to b_h$.
\end{itemize}
\end{prop}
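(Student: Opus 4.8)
The plan is to prove both implications with characteristic functions. Writing $\varphi_n$ for the common characteristic function of a row entry $\cc(n,1)$, Lévy's continuity theorem reduces the weak convergence of $\sum_{j=1}^n\cc(n,j)$ to an $ID(\sigma^2,b,\Pi)$ law to the pointwise convergence $\varphi_n(\theta)^n\to e^{\Psi(\theta)}$, with $\Psi$ the Lévy--Khintchine exponent \eqref{psi} (which is itself a characteristic function, so the limit is legitimate). The object to analyze throughout is the sequence of measures $\mu_n(\cdot):=n\,\P(\cc(n,1)\in\cdot)$ on $\R$, of total mass $n$, in terms of which
$$n\bigl(\varphi_n(\theta)-1\bigr)=\int_\R\bigl(e^{i\theta x}-1\bigr)\,\mu_n(dx),$$
and the goal is to match the three displayed hypotheses, one at a time, to the three pieces of $\Psi$.

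For the ``if'' direction: the first hypothesis gives $\P(|\cc(n,1)|>\eps)=\mu_n\{|x|>\eps\}/n\to 0$, so the array is null and $\varphi_n(\theta)\to1$ uniformly on compacts. The heart of the matter is then the claim $n(\varphi_n(\theta)-1)\to\Psi(\theta)$: fix $h\in(0,1)$ that is not a $\Pi$-atom and split $\int_\R(e^{i\theta x}-1)\mu_n(dx)$ over $\{|x|\le h\}$ and $\{|x|>h\}$; on the inner region use $e^{i\theta x}-1=i\theta x-\tfrac12\theta^2x^2+O(|\theta x|^3)$, so that the linear part gives $i\theta\,n\E\bigl(\cc(n,1)1_{\{|\cc(n,1)|\le h\}}\bigr)\to i\theta b_h$, the quadratic part gives $-\tfrac12\theta^2\,n\E\bigl(|\cc(n,1)|^21_{\{|\cc(n,1)|\le h\}}\bigr)\to-\tfrac12\theta^2\sigma_h^2$, and the cubic remainder is $O\bigl(|\theta|^3h\,\sigma_h^2\bigr)$; on the outer region $\int_{|x|>h}(e^{i\theta x}-1)\mu_n(dx)\to\int_{|x|>h}(e^{i\theta x}-1)\Pi(dx)$ by the vague convergence on $\Roo$ together with the absence of escaping mass, which is precisely what the $\overline\R$ part of the first hypothesis encodes (a Lévy measure does not charge $\infty$). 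Letting $h\downarrow0$ along non-atoms --- using $\sigma_h^2\to\sigma^2$, dominated convergence for $\int_{0<|x|\le1}(e^{i\theta x}-1-i\theta x)\,\Pi(dx)$, and keeping the truncation at $1$ since $\int_{|x|\le1}|x|\,\Pi(dx)$ may be infinite --- identifies the limit as $\Psi(\theta)$ via a $\limsup$-in-$h$ argument. Boundedness of $n(\varphi_n(\theta)-1)$ then gives $n\log\varphi_n(\theta)=n(\varphi_n(\theta)-1)+o(1)\to\Psi(\theta)$, hence $\varphi_n(\theta)^n\to e^{\Psi(\theta)}$.

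For the ``only if'' direction, assume $\varphi_n(\theta)^n\to e^{\Psi(\theta)}$. First establish the null-array property: symmetrizing, $|\varphi_n(\theta)|^2=\varphi_{\cc(n,1)-\cc'(n,1)}(\theta)$, and $|e^{\Psi(\theta)}|>0$ forces $1-|\varphi_n(\theta)|^2\le C_\theta/n$; integrating in $\theta$ over a neighborhood of $0$ gives $\int(1\wedge x^2)\,\bar\mu_n(dx)=O(1)$ for the symmetrized $\bar\mu_n$, which yields $\cc(n,1)\to0$ in probability (after desymmetrization and a centering step) and the tightness needed on $\mu_n$. Rerunning the expansion of the ``if'' direction in reverse gives $n(\varphi_n(\theta)-1)\to\Psi(\theta)$; then every subsequential vague limit of $\mu_n$ on $\Roo$, together with the limiting second and first truncated moments on $\{|x|\le h\}$, furnishes a Lévy--Khintchine representation of $\Psi$, and uniqueness of that representation forces these limits to equal $\Pi$, $\sigma_h^2$ and $b_h$, which are exactly the three conditions. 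I expect the ``only if'' direction to be the main obstacle: upgrading the single scalar limit $n(\varphi_n(\theta)-1)\to\Psi(\theta)$ to the full triple of convergences needs the compactness/tightness control on $\mu_n$ coming from the null-array estimate together with a careful appeal to uniqueness in Lévy--Khintchine, and one must keep the bookkeeping relating the level-$h$ truncated drift $b_h$ to the $\tfrac{x}{1+x^2}$-compensated drift $b$ of \eqref{psi} consistent throughout. Since this is the classical Doeblin--Gnedenko theorem, one may instead simply invoke \cite[Cor.~15.16]{kallenberg2002foundations}.
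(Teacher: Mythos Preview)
The paper does not prove this proposition at all: it is stated in the appendix as a known result, attributed to Doeblin and Gnedenko, with a bare citation to \cite[Cor.~15.16]{kallenberg2002foundations}. Your closing sentence --- ``one may instead simply invoke \cite[Cor.~15.16]{kallenberg2002foundations}'' --- is precisely, and entirely, what the paper does.

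Your sketch of an independent proof is therefore strictly more than the paper offers. The outline is along the standard lines and is broadly correct, though a few points would need tightening if you actually wanted a self-contained argument rather than a citation. In the ``if'' direction, the double limit (first $n\to\infty$ at fixed $h$, then $h\downarrow 0$) must be reconciled with the compensator in \eqref{psi}: you truncate the drift at level $h$ and obtain $b_h$, whereas $\Psi$ uses the compensator $x/(1+x^2)$; reconciling $b_h$ with $b$ requires exactly the bookkeeping you flag at the end, and the ``$\limsup$-in-$h$'' phrase hides the two-line computation that makes this work. In the ``only if'' direction, the step from $\varphi_n(\theta)^n\to e^{\Psi(\theta)}$ to the null-array property and to relative compactness of $(\mu_n)$ on $\overline\R\setminus\{0\}$ is the real content; your symmetrization idea is the right start, but extracting the three separate limits from the single scalar convergence $n(\varphi_n-1)\to\Psi$ genuinely needs the tightness of the truncated-variance measures $x^2\,1_{\{|x|\le h\}}\,\mu_n(dx)$ together with uniqueness of the L\'evy--Khintchine triple, and your sketch gestures at this without quite closing it. None of this matters for the paper's purposes, since the result is simply quoted.
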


%
%
\paragraph{From the \CS\ transform to LSDs.}
The use of the \CS\ transform
in the context of random matrices dates back to Mar\v{c}enko and Pastur
\cite{marchenko1967distribution}.
Mainly, one obtains convergence of
the ESDs of  the random matrices $\(\cc_n\)_{n\in\N}$ by
showing convergence of the \CS\ transforms $\(S_{\mu_{\cc_n}}(z)\)_{n\in\N}$ as defined in \eqref{def:Stieltjes}.
The lemma given here is taken from Section 2.4 in \cite{anderson2010introduction}.

The \CS\ transform is invertible: For any open interval $I$ such that neither
endpoint is an atom of $\mu$
\be\label{st inversion}
\mu(I)=\lim_{y\to 0}\frac 1 \pi \int_I \Im S_\mu(x+iy) \, dx.
\ee
This uniquely determines the
measure $\mu$ so that one then obtains the following result:

\begin{lem}[{Weak} convergence via \CS\ transforms]\label{lem:2.4.4}
Suppose $\mu_{n}$ is a sequence of probability measures on $\R$
and for each $z\in \C_+$, $S_{\mu_{n}} (z)$ converges to $S(z)$ which is the \CS\
transform of some probability measure $\mu$. Then $\mu_{n}$ converges
weakly to $\mu$.
\end{lem}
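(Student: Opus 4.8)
The plan is to deduce weak convergence from the two standard ingredients: (i) tightness of the family $\{\mu_n\}_{n\in\N}$, and (ii) identification of every subsequential weak limit with $\mu$, after which the subsequence principle gives $\mu_n\Rightarrow\mu$. Ingredient (ii) is the easy half once (i) is in hand: for fixed $z\in\C_+$ the function $x\mapsto(x-z)^{-1}$ is bounded (by $(\Im z)^{-1}$) and continuous on $\R$, so along any weakly convergent subsequence $\mu_{n_k}\Rightarrow\nu$ one has $S_{\mu_{n_k}}(z)\to S_\nu(z)$; comparing with the hypothesis $S_{\mu_{n_k}}(z)\to S(z)=S_\mu(z)$ gives $S_\nu\equiv S_\mu$ on $\C_+$, and then the inversion formula \eqref{st inversion} forces $\nu=\mu$.

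For tightness, the key is the elementary identity, valid for every probability measure $\mu$ on $\R$ and every $t>0$,
\[
1-t\,\Im S_\mu(it)=\int_\R\frac{x^2}{x^2+t^2}\,\mu(dx),
\]
which, on bounding the integrand below by $M^2/(M^2+t^2)$ over $\{|x|>M\}$ and specializing to $t=M$, yields $\mu(\{|x|>M\})\le 2\,(1-M\,\Im S_\mu(iM))$. First I would apply this to the limit measure: since $M\,\Im S_\mu(iM)=\int M^2/(x^2+M^2)\,\mu(dx)\to 1$ as $M\to\infty$ by dominated convergence, I can fix a single $M_0$ for which $1-M_0\,\Im S_\mu(iM_0)$ is as small as desired. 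The hypothesis, applied only at the one point $z=iM_0$, then gives $S_{\mu_n}(iM_0)\to S_\mu(iM_0)$, so $\mu_n(\{|x|>M_0\})$ is uniformly small for all large $n$; the finitely many remaining $\mu_n$ are tight individually, so enlarging $M_0$ absorbs them. This establishes tightness of $\{\mu_n\}_{n\in\N}$, and Prokhorov's theorem then supplies the subsequences needed for step (ii).

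The only real obstacle is this tightness step, and specifically the fact that the hypothesis provides merely \emph{pointwise} convergence of the Cauchy--Stieltjes transforms, with no a priori uniformity in $n$; the two-stage device above — selecting one good truncation level $M_0$ from the limit measure and then handling the finitely many exceptional $\mu_n$ separately — is precisely what converts pointwise convergence of the transforms into the required uniform tail bound.
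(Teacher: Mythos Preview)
Your argument is correct, but the paper takes a shorter route that sidesteps the explicit tightness computation entirely. Instead of proving tightness and invoking Prokhorov, the paper passes to a \emph{vaguely} convergent subsequence (which always exists, by Helly's selection theorem, with a sub-probability limit). The crucial observation is that for fixed $z\in\C_+$ the function $x\mapsto(x-z)^{-1}$ lies in $C_0(\R)$, not merely in $C_b(\R)$; hence vague convergence already suffices to pass to the limit in the Cauchy--Stieltjes transform. The inversion formula then identifies the vague limit with $\mu$, and since $\mu$ is by hypothesis a genuine probability measure, vague convergence of probability measures to a probability measure upgrades automatically to weak convergence. In effect, the paper lets the hypothesis ``$\mu$ is a probability measure'' do the work that your tightness estimate does by hand. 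Your approach has the virtue of being self-contained and of making the role of the Stieltjes transform in controlling tails explicit; the paper's approach is slicker but relies on recognizing that the resolvent kernel vanishes at infinity.
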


\begin{proof}
Let $n_k$ be a subsequence for which $\mu_{n_k}$ converges vaguely
to some sub-probability measure $\mu$. For every $z\in\C_+$,
$x\mapsto \frac 1 {x-z}$ is continuous and goes to $0$ as
$x\to\infty$.  Thus one has $S_{\mu_{n_k}}(z)\to S_\mu(z)$ pointwise
for each $z\in\C_+$. By the hypothesis, we have $S(z)=S_\mu(z)$. We
then use \eqref{st inversion} to see that every subsequence gives us
the same limit which implies that $\mu_n$ converges vaguely to
$\mu$. But $\mu$ is a probability measure {by hypothesis}, thus we upgrade this to
weak convergence.
\end{proof}

\newcommand{\etalchar}[1]{$^{#1}$}

\end{document}